\theoremstyle{definition}
\newtheorem{thm}{Theorem}[section]
\newtheorem{defi}[thm]{Definition}
\newtheorem{lemm}[thm]{Lemma}
\newtheorem{cor}[thm]{Corollary}
\newtheorem{rem}[thm]{Remark}
\newtheorem{prop}[thm]{Proposition}
\theoremstyle{remark} 
\newtheorem{case}{Case}
\newtheorem{step}{Step}
\DeclareMathOperator{\Mod}{mod}
\DeclareMathOperator*{\osc}{osc}
\DeclareMathOperator{\diam}{diam}
\begin{document}
	\title[Reciprocal lower bound on modulus of curve families]{Reciprocal lower bound on modulus of curve families in metric surfaces} 
	
	\author{Kai Rajala} 
	\address{Department of Mathematics and Statistics, University of Jyv\"askyl\"a, P.O. Box 35 (MaD), FI-40014, University of Jyv\"askyl\"a, Finland.}
	\email{kai.i.rajala@jyu.fi, matthew.d.romney@jyu.fi}
	
	\author{Matthew Romney}
	%\address{Department of Mathematics and Statistics, University of Jyv\"askyl\"a, P.O. Box 35 (MaD), FI-40014, University of Jyv\"askyl\"a, Finland.}
	%\email{matthew.d.romney@jyu.fi}
	
	\thanks{The first author was supported by the Academy of Finland, project number 308659. The second author was partially supported by the Academy of Finland grant 288501 and by the ERC Starting Grant 713998 GeoMeG. \newline {\it 2010 Mathematics Subject Classification.} Primary 30L10, Secondary 30C65, 28A75.} 
	%\newline {\it Keywords.} mappings of finite distortion, Jacobian. \newline
	
	\begin{abstract}
		We prove that any metric space $X$ homeomorphic to $\mathbb{R}^2$ with locally finite Hausdorff 2-measure satisfies a reciprocal lower bound on modulus of curve families associated to a quadrilateral. More precisely, let $Q \subset X$ be a topological quadrilateral with boundary edges (in cyclic order) denoted by $\zeta_1, \zeta_2, \zeta_3, \zeta_4$ and let $\Gamma(\zeta_i, \zeta_j; Q)$ denote the family of curves in $Q$ connecting $\zeta_i$ and $\zeta_j$; then $\Mod \Gamma(\zeta_1, \zeta_3; Q) \Mod \Gamma(\zeta_2, \zeta_4; Q) \geq 1/\kappa$ for $\kappa = 2000^2\cdot (4/\pi)^2$.  This answers a question in \cite{Raj:16} concerning minimal hypotheses under which a metric space admits a quasiconformal parametrization by a domain in $\mathbb{R}^2$. 
	\end{abstract}
	
      	\maketitle 
	
	\section{Introduction} 
	
	The classical uniformization theorem states that any simply connected Riemann surface can be mapped onto either the Euclidean plane $\mathbb{R}^2$, the sphere $\mathbb{S}^2$, or the unit disk $\mathbb{D}$ by a conformal mapping. For obtaining similar results in the setting of metric spaces, the class of conformal mappings is too restrictive and it is natural to consider instead some type of quasiconformal mapping. One such class is {\it quasisymmetric mappings}, and a large body of recent literature is dedicated to quasisymmetric uniformization of metric spaces. We mention specifically papers by Semmes \cite{Sem:96b} and Bonk--Kleiner \cite{BonkKle:02} as important references.
	
	Another approach is to use the so-called {\it geometric definition} of quasiconformal mappings, based on the notion of modulus of a curve family. In the recent paper \cite{Raj:16}, the first-named author proves a version of the uniformization theorem for  metric spaces homeomorphic to $\mathbb{R}^2$ with locally finite Hausdorff 2-measure. In the present paper, we call such spaces {\it metric surfaces}. 
	
	In \cite{Raj:16} a condition on metric surfaces called {\it reciprocality} (see Definition \ref{defi:reciprocality} below) is introduced and shown to be necessary and sufficient for the existence of a quasiconformal parametrization by a domain in $\mathbb{R}^2$. We refer the reader to the introduction of \cite{Raj:16} for a detailed overview of the problem and additional references to the literature. 
	
	In this paper, we show that one part of the definition of reciprocality is satisfied by all metric surfaces and therefore is unnecessary. This result gives a positive answer to Question 17.5 from \cite{Raj:16}. 
	
	We first recall the relevant definitions and establish some notation. Let $(X,d,\mu)$ be a metric measure space. For a family $\Gamma$ of curves in $X$, the {\it $p$-modulus} of $\Gamma$ is defined as 
	$$\Mod_p \Gamma = \inf \int_X \rho^p\,d\mu ,$$
	where the infimum is taken over all Borel functions $\rho: X \rightarrow [0,\infty]$ with the property that $\int_{\gamma} \rho\,ds \geq 1$ for all locally rectifiable curves $\gamma \in \Gamma$. Such a function $\rho$ is called {\it admissible}. If the exponent $p$ is understood, a homeomorphism $f: (X,d,\mu) \rightarrow (Y,d',\nu)$ between metric measure spaces is {\it quasiconformal} if there exists $K \geq 1$ such that
	$$K^{-1} \Mod_p \Gamma \leq \Mod_p f(\Gamma) \leq K \Mod_p \Gamma $$
	for all curve families $\Gamma$ in $X$. In this paper, we always take $p=2$ and assume that a metric space $(X,d)$ is equipped with the Hausdorff 2-measure $\mathcal{H}^2$, and we write $\Mod \Gamma$ in place of $\Mod_2 \Gamma$.

	Throughout this paper, we assume that $(X,d)$ is a metric surface as defined above. A {\it quadrilateral} in $X$ is a subset $Q \subset X$ homeomorphic to  $[0,1]^2$ with four designated non-overlapping boundary arcs, denoted in cyclic order by $\zeta_1$, $\zeta_2$, $\zeta_3$, $\zeta_4$, which are the images of $[0,1] \times\{0\}$, $\{1\} \times [0,1]$, $[0,1] \times \{1\}$ and $\{0\} \times [0,1]$, respectively, under the parametrizing homeomorphism from $[0,1]^2$. We write $\Gamma_1(Q)$ to denote the family $\Gamma(\zeta_1,\zeta_3; Q)$ of curves in $Q$ connecting $\zeta_1$ and $\zeta_3$, and $\Gamma_2(Q)$ to denote the family $\Gamma(\zeta_2,\zeta_4; Q)$ of curves in $Q$ connecting $\zeta_2$ and $\zeta_4$. More generally, for disjoint closed sets $E,F$ contained in the set $G \subset X$, the notation $\Gamma(E,F;G)$ is used to denote the family of curves in $G$ which intersect both $E$ and $F$.

	\begin{defi} \label{defi:reciprocality}
		The metric surface $(X, d)$  is {\it reciprocal} if there exists $\kappa \geq 1$ such that for all quadrilaterals $Q$ in $X$,
		\begin{equation} \label{equ:reciprocality(1)}
		\Mod \Gamma_1(Q) \Mod \Gamma_2(Q) \leq \kappa
		\end{equation}
		and
		\begin{equation} \label{equ:reciprocality(2)}
		\Mod \Gamma_1(Q) \Mod \Gamma_2(Q) \geq 1/\kappa,
		\end{equation} 
		and for all $x \in X$ and $R>0$ such that $X \setminus B(x,R) \neq \emptyset$,
		\begin{equation} \label{equ:reciprocality(3)} 
		\lim_{r \rightarrow 0} \Mod \Gamma(B(x,r), X \setminus B(x,R); B(x,R)) = 0.
		\end{equation} 
	\end{defi} 
	
	We then have the following result. 
	\begin{thm}[\cite{Raj:16}, Thm. 1.4] \label{thm:uniformization}
		There exists a domain $\Omega \subset \mathbb{R}^2$ and a quasiconformal mapping $f: (X,d) \rightarrow \Omega$ if and only if $X$ is reciprocal. 
	\end{thm}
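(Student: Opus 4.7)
The plan is to establish the two implications of the equivalence separately, with the backward direction being considerably more involved.

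For the forward direction, assume $f \colon (X,d) \to \Omega$ is a $K$-quasiconformal homeomorphism onto a planar domain. Given a quadrilateral $Q \subset X$, the image $f(Q)$ is a topological quadrilateral in $\Omega$ whose conformal modulus $M := \Mod \Gamma_1(f(Q))$ satisfies $\Mod \Gamma_2(f(Q)) = 1/M$ — this is the classical reciprocity of conformal modulus for planar quadrilaterals, established via extremal length and the Riemann map to a Euclidean rectangle. Applying the quasi-invariance of modulus under $f$ to each factor yields $1/K^2 \leq \Mod \Gamma_1(Q) \cdot \Mod \Gamma_2(Q) \leq K^2$, giving \eqref{equ:reciprocality(1)} and \eqref{equ:reciprocality(2)} with $\kappa = K^2$. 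For \eqref{equ:reciprocality(3)}, the Euclidean computation $\Mod \Gamma(B(0,r), \partial B(0,R); B(0,R)) = 2\pi/\log(R/r)$ for concentric disks, combined with the containment $f(B(x,r)) \subset f(B(x,R))$ and quasi-invariance, forces the corresponding modulus in $X$ to tend to $0$ as $r \to 0$.

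For the backward direction, the strategy is to construct a quasiconformal parametrization directly from reciprocality, imitating the Ahlfors--Beurling extremal-length coordinates. Fix a quadrilateral $Q \subset X$ (and exhaust $X$ by such $Q$'s if needed), with boundary arcs $\zeta_1,\ldots,\zeta_4$. Choose an extremal admissible density $\rho$ for $\Mod \Gamma_1(Q)$ and define the horizontal coordinate $u \colon Q \to \mathbb{R}$ to be the infimum of $\int_\gamma \rho\,ds$ over curves $\gamma$ from $\zeta_4$ to $x$; define the vertical coordinate $v$ symmetrically using an extremal density for $\Gamma_2(Q)$. The candidate parametrization is $f := (u,v) \colon Q \to [0, M] \times [0,1]$ where $M = \Mod \Gamma_1(Q)$. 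The key steps are: (i) produce extremal densities via lower semicontinuity of $\rho \mapsto \int \rho^2\,d\mathcal{H}^2$ and convexity of the admissible class; (ii) show $u$, $v$ are continuous with the correct boundary behavior, invoking \eqref{equ:reciprocality(3)} to rule out discontinuities at individual points; (iii) show $f$ is a homeomorphism, which requires identifying level sets of $u$ as curves in $\Gamma_1(Q)$ and $\Gamma_2(Q)$ that together foliate $Q$; (iv) verify quasiconformality with constant depending only on $\kappa$ by comparing $\Mod \Gamma$ with $\Mod f(\Gamma)$, where the upper bound \eqref{equ:reciprocality(1)} controls the distortion of one family and the lower bound \eqref{equ:reciprocality(2)} the other.

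The main obstacle is step (iii): showing that this naively defined coordinate map is genuinely a homeomorphism onto the target rectangle. In the Euclidean or Riemannian setting one would invoke gradient flows or conjugate harmonic functions, but those tools are unavailable on a general metric surface. Instead the argument must proceed through purely modulus-theoretic reasoning coupled with the planar topology of $Q$ (Jordan curve theorem, connectedness of level sets of extremal length functions). I expect the full force of all three reciprocality conditions to be used here: condition \eqref{equ:reciprocality(3)} prevents $f$ from collapsing distinct points, while \eqref{equ:reciprocality(1)} and \eqref{equ:reciprocality(2)} together force the horizontal and vertical foliations to intersect transversally in a quantitatively controlled way. A secondary challenge is to patch the local parametrizations on an exhausting sequence of quadrilaterals into a global map onto a planar domain, which should follow from a normal-family argument using the uniform distortion bound from step (iv) and uniqueness properties of extremal densities.
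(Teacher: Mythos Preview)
The present paper does not prove this theorem; it is quoted from \cite{Raj:16} (Theorem~1.4 there) and accompanied only by the remark that necessity is ``immediate'' since $\mathbb{R}^2$ is reciprocal, while sufficiency requires constructing a map ``by hand''. So there is no proof here to compare against. Your outline is consistent with that brief commentary and with the construction imported from \cite{Raj:16} in Section~\ref{sec:preliminaries}: extremal density $\rho$ for $\Gamma_1$, potential $u(x)$ defined as an infimum of $\rho$-integrals, a conjugate coordinate $v$, and the candidate map $(u,v)$ to a rectangle.

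Two remarks on accuracy. First, in the construction actually used (see \eqref{equ:u_definition}), $u(x)$ is defined via subcurves of some $\gamma \in \Gamma_1 \setminus \Gamma_0$ joining $\zeta_1$ to $x$, not via arbitrary curves from $\zeta_4$; your variant is a different function and would require its own upper-gradient and boundary-value arguments. Second, what you have written is a plan, not a proof: each of your steps (ii)--(iv) occupies multiple sections of \cite{Raj:16}, and the injectivity and surjectivity in step (iii) are far more delicate than ``level sets foliate $Q$'' suggests (indeed, even the continuity of $u$ is nontrivial and is revisited in Section~\ref{sec:continuity_u} of the present paper). None of this is a defect in context, since the paper itself defers entirely to \cite{Raj:16}; your sketch captures the correct architecture of that argument.
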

	
	The necessity of each condition in Definition \ref{defi:reciprocality} is immediate; standard computations show that $\mathbb{R}^2$ is reciprocal. The actual content of Theorem \ref{thm:uniformization} is that these conditions are sufficient to construct ``by hand'' a mapping that can then be shown to be quasiconformal. However, the question of whether a weaker set of assumptions might still be sufficient to construct such a quasiconformal mapping is not fully settled in \cite{Raj:16}. 
	
	It is not difficult to construct examples of metric surfaces for which conditions \eqref{equ:reciprocality(1)} and \eqref{equ:reciprocality(3)} fail. For instance, the quotient space $\mathbb{R}^2/ \sim$, where $x \sim y$ if $x=y$ or if 
	both $x$ and $y$ belong to the closed unit disc, has a natural metric for which both conditions fail. On the other hand, it was conjectured in \cite{Raj:16} (Question 17.5) that in fact condition \eqref{equ:reciprocality(2)} holds for all $(X,d)$. 
	The main result of this paper shows that this is indeed the case. 
	
	\begin{thm} \label{thm:main}
		Let $(X,d)$ be a metric space homeomorphic to $\mathbb{R}^2$ with locally finite Hausdorff 2-measure. There exists a constant $\kappa\geq 1$, independent of $X$, such that $\Mod \Gamma_1(Q) \Mod \Gamma_2(Q) \geq 1/\kappa$ for all quadrilaterals $Q \subset X$.
	\end{thm}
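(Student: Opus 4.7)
The plan is to establish the bound by exhibiting, from a near-optimal admissible density for $\Gamma_1(Q)$, a one-parameter family of curves in $\Gamma_2(Q)$ whose geometry forces a lower bound on $\Mod \Gamma_2(Q)$. The essential ingredients are the level sets of a $\rho$-distance function on $Q$ and the Eilenberg coarea inequality for metric surfaces, which has sharp constant $4/\pi$ in dimension two.

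Step 1 (setup). Fix $\epsilon > 0$ and choose a bounded Borel admissible density $\rho$ for $\Gamma_1(Q)$ with $\int_Q \rho^2 \, d\mathcal{H}^2 \leq (1 + \epsilon) \Mod \Gamma_1(Q)$ (truncating a minimizing sequence at a large threshold). Define the $\rho$-distance from $\zeta_1$,
\[
u(y) = \inf\left\{ \int_\gamma \rho \, ds : \gamma \text{ a rectifiable curve in } Q \text{ from } \zeta_1 \text{ to } y\right\},
\]
which is a Lipschitz function on $Q$ vanishing on $\zeta_1$, at least $1$ on $\zeta_3$, and admitting $\rho$ as an upper gradient.

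Step 2 (topology). For each $t \in (0,1)$, the sublevel set $\{u \leq t\}$ is closed, contains $\zeta_1$, misses $\zeta_3$, and its topological frontier in $Q$ is contained in $u^{-1}(t)$. Since $Q$ is a topological quadrilateral, this frontier must contain a subcontinuum meeting both $\zeta_2$ and $\zeta_4$; from this continuum I aim to extract an arc $\alpha_t \in \Gamma_2(Q)$ lying inside $u^{-1}(t)$.

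Step 3 (coarea and Cauchy--Schwarz). For any admissible $\sigma$ for $\Gamma_2(Q)$, admissibility together with $\alpha_t \subset u^{-1}(t)$ gives $\int_{u^{-1}(t)} \sigma \, d\mathcal{H}^1 \geq 1$. Integrating over $t \in (0,1)$ and applying the weighted Eilenberg coarea inequality with upper gradient $\rho$ for $u$ yields
\[
1 \leq \int_0^1 \int_{u^{-1}(t)} \sigma \, d\mathcal{H}^1 \, dt \leq \frac{4}{\pi}\int_Q \sigma \rho \, d\mathcal{H}^2 \leq \frac{4}{\pi}\left(\int_Q \sigma^2\, d\mathcal{H}^2\right)^{1/2}\left(\int_Q \rho^2\,d\mathcal{H}^2\right)^{1/2}.
\]
Squaring, taking the infimum over admissible $\sigma$, and letting $\epsilon \to 0$ gives a bound of the shape $\Mod \Gamma_1(Q)\,\Mod \Gamma_2(Q) \geq (\pi/4)^2$.

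I expect the chief obstacle to be Step 2. The function $u$ is only Lipschitz, and a level set of such a function on a general metric surface can be a connected continuum that fails to be locally connected, so it need not contain any arc realizing the topological separation between $\zeta_2$ and $\zeta_4$; direct appeal to Sard-type regularity is unavailable. A likely workaround is to replace exact level sets with level strips of an approximation of $u$ built from a fine covering of $Q$ by small balls, extract a combinatorial path in the dual graph of the covering, and convert this back to a curve in $Q$. The combinatorial cost of that procedure is presumably the source of the sizable absolute factor $2000^2$ appearing in the stated value of $\kappa$.
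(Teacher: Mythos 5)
Your high-level strategy (build a $\rho$-distance $u$ from $\zeta_1$, show level sets of $u$ carry curves in $\Gamma_2(Q)$, integrate an admissible $\sigma$ for $\Gamma_2(Q)$ over these level sets, then apply a coarea estimate and Cauchy--Schwarz) is indeed the strategy of the paper. But the proposal has a genuine gap at exactly the step you treat as routine. In Step~3 you invoke a ``weighted Eilenberg coarea inequality with upper gradient $\rho$ for $u$'' with constant $4/\pi$. No such inequality is available off the shelf on a general metric surface with only locally finite $\mathcal{H}^2$: the standard Eilenberg/coarea inequality (the one with the sharp $4/\pi$) is stated for \emph{Lipschitz} maps and produces the \emph{global} Lipschitz constant $L$, not a pointwise weight $\rho(x)$ inside the integral. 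Upgrading it to a weighted version, with the level sets replaced by their closures and with $\rho$ only in $L^2$, is precisely the main technical content of the paper (Proposition~\ref{prop:coarea_u}), and even then the constant obtained is $2000$ (optimizable to $216$), not $4/\pi$. So the coarea step is not a black box; it is the theorem.

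There are two further inaccuracies. First, truncating a near-minimizing sequence to make $\rho$ bounded does not make $u$ Lipschitz with respect to $d$: boundedness of $\rho$ only controls $u$ against the intrinsic \emph{length} metric on $Q$, which in a general metric surface need not be comparable to $d$ (indeed $u$ in the paper is not even known to be continuous a priori; see Corollary~\ref{cor:continuity} and Theorem~\ref{thm:continuity}). Truncation also risks destroying admissibility, which you would need to repair. Second, your diagnosis that the factor $2000^2$ comes from a combinatorial extraction of arcs in Step~2 is off. In the paper, once $\mathcal{H}^1(\overline{u^{-1}(t)})<\infty$ is known for a.e.\ $t$ (a consequence of the coarea inequality with $g=1$), extracting a rectifiable arc joining $\zeta_2$ and $\zeta_4$ is handled cleanly by Proposition~\ref{prop:connect} (connectedness of $\overline{u^{-1}(t)}$, via the maximum principle) together with Proposition~\ref{prop:existence_paths} (Semmes), with no loss in the constant. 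The $2000$ enters in the proof of Proposition~\ref{prop:coarea_u} through a Vitali-type covering decomposition of $Q$ into ``good'' points (where $\int_{B(x,10r)}\rho\,d\mathcal{H}^2\leq 200\int_{B(x,r)}\rho\,d\mathcal{H}^2$ at arbitrarily small scales) and ``bad'' points (where the mass of $\rho$ decays geometrically), combined with the oscillation bound of Lemma~\ref{lemm:oscillation}. Without filling in that covering argument, the chain of inequalities in your Step~3 is unsupported.
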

	
	As a consequence of Theorem \ref{thm:main}, condition \eqref{equ:reciprocality(2)} in Definition \ref{defi:reciprocality} is unnecessary. Our proof as written gives a value of $\kappa = 2000^2\cdot (4/\pi)^2$, though optimizing each step would improve this to $\kappa = 216^2\cdot (4/\pi)^2$. 
	It is a corollary of Theorem 1.5 in \cite{Raj:16}, as improved in \cite{Rom:17}, that if $X$ is reciprocal (and hence $X$ admits a quasiconformal parametrization), then Theorem \ref{thm:main} holds with $\kappa = (4/\pi)^2$.  For this reason, it is natural to conjecture that the best possible $\kappa$ for the general case is also $(4/\pi)^2$, though our techniques fall far short of this.
	
	In Proposition 15.8 of \cite{Raj:16}, Theorem \ref{thm:main} (with a larger value of $\kappa$) is proved under the assumption that $X$ satisfies the mass upper bound $\mathcal{H}^2(B(x,r)) \leq Cr^2$ for some $C>0$ independent of $x$ and $r$. Our proof follows a similar outline; the difficulty is to avoid using the upper bound. 
	
	The basic approach is to construct an ``energy-minimizing'' or ``harmonic'' function $u: Q \rightarrow [0,\infty)$ which satisfies the boundary constraints $u|\zeta_1 = 0$ and $u|\zeta_3 = 1$. Working only from the assumptions at hand, one can establish relevant properties of $u$. The main property needed to prove Theorem \ref{thm:main} is that a version of the coarea inequality holds for $u$. For the case when $X$ satisfies the mass upper bound $\mathcal{H}^2(B(x,r)) \leq Cr^2$, this is found in Proposition 15.7 of \cite{Raj:16}. The coarea inequality implies that, from the level sets of $u$, one may extract a large family of rectifiable curves contained in $\Gamma_2(Q)$. Since $u$ is defined by means of the curve family $\Gamma_1(Q)$, this provides the necessary link between $\Gamma_1(Q)$ and $\Gamma_2(Q)$. Roughly speaking, if there are few curves in $\Gamma_1(Q)$, as quantified by modulus, then these corresponding curves in $\Gamma_2(Q)$ must be short, which implies that the modulus of $\Gamma_2(Q)$ is large. %Heuristically, the idea is that if $\Gamma_1(Q)$ contains few curves, then the corresponding curves in $\Gamma_2(Q)$ are shorter
	
	The organization of the paper is the following. Section \ref{sec:preliminaries} contains some basic notation and background, including an overview of the construction of the harmonic function $u$ described in the previous paragraph. In Section \ref{sec:level_sets}, we prove several properties of the level sets of $u$ which are required for the proof of Theorem \ref{thm:main}. This section expands on the material present in Section 4 of \cite{Raj:16}. Section \ref{sec:lower_bound} contains the main technical portion of our paper, the coarea inequality for $u$ described previously valid for all metric surfaces, as well as the proof of Theorem \ref{thm:main}. Section \ref{sec:continuity_u} contains a final auxiliary result, namely that the harmonic function $u$ is continuous in general. The continuity of $u$ had previously been proved as Theorem 5.1. of \cite{Raj:16} using the reciprocality conditions \eqref{equ:reciprocality(2)} and \eqref{equ:reciprocality(3)}.
	
	%%%%%%%%%%%%%%%%%%%%%%%%%%%%%%%%%%%%%%%%%%%%%%%%%%%%%%%%%%%%%%%%%%%%%%%%%%%%%%%%%%%%%%%%%%%%%%%%%%%%%%%%%%%%%%%%%%%%%%%
	%%%%%%%%%%%%%%%%%%%%%%%%%%%%%%%%%%%%%%%%%%%%%%%%%%%%%%%%%%%%%%%%%%%%%%%%%%%%%%%%%%%%%%%%%%%%%%%%%%%%%%%%%%%%%%%%%%%%%%%
	
	\section{Preliminaries} \label{sec:preliminaries}
	
	In this section, we give a review of notation and auxiliary results from \cite{Raj:16} that will be needed. For the remainder of this paper, we let $X$ be a metric surface and $Q$ denote a fixed quadrilateral in $X$. We write $\Gamma_1$ for $\Gamma_1(Q)$. We assume throughout this paper that all curves are non-constant.  
	
	For $k \in \{1,2\}$ and $\varepsilon > 0$, the {\it $k$-dimensional Hausdorff $\varepsilon$-content} of a set $E \subset X$, denoted by $\mathcal{H}_\varepsilon^k(E)$, is defined as
	$$\mathcal{H}_\varepsilon^k(E) = \inf \left\{ \sum a_k \diam(A_j)^k: E \subset \bigcup_{j=1}^\infty A_j, \diam A_j < \varepsilon \right\},  $$
	with normalizing constants $a_1 = 1$ and $a_2 = \pi/4$. The {\it Hausdorff $k$-measure} of $E$ is defined as $\mathcal{H}^k(E) = \lim_{\varepsilon \rightarrow 0} \mathcal{H}_\varepsilon^k(E)$.
	
	We proceed with an overview of the construction of the harmonic function $u$ corresponding to the curve family $\Gamma_1$, as given in Section 4 of \cite{Raj:16}. By a standard argument using Mazur's lemma, there exists a sequence of admissible functions $(\rho_k)$ for $\Gamma_1$ that converges strongly in $L^2$ to a function $\rho \in L^2(Q)$ satisfying $\int_Q \rho^2\,d\mathcal{H}^2 = \Mod \Gamma_1$. By Fuglede's lemma, 
	\begin{equation} \label{equ:fuglede}
	\int_\gamma \rho_k\, ds \rightarrow \int_{\gamma} \rho\, ds < \infty
	\end{equation} 
	for all curves $\gamma$ in $Q$ except for a family of modulus zero. In particular, this implies that $\rho$ is weakly admissible for $\Gamma_1$ (that is, admissible after removing from $\Gamma_1$ a subfamily of modulus zero). We extend the definition of $\rho$ to the entire space $X$ by setting $\rho(x) = 0$ for all $x \in X \setminus Q$.
	
	Let $\Gamma_0$ be the family of curves in $Q$ with a subcurve on which \eqref{equ:fuglede} does not hold. Note that $\Mod \Gamma_0 = 0$.  We define the function $u$ as follows. Let $x \in Q$. If there exists a curve $\gamma \in \Gamma_1 \setminus \Gamma_0$ whose image contains $x$, then define
	\begin{equation} \label{equ:u_definition}
	u(x) = \inf \int_{\gamma_x} \rho\,ds,
	\end{equation}
	where the infimum is taken over all such curves $\gamma$ and over all subcurves $\gamma_x$ of $\gamma$ joining $\zeta_1$ and $x$. Otherwise, define $u(x)$ by
	$$u(x) = \liminf_{y \in E, y \rightarrow x} u(y),$$
	where $E$ is the set of those $y \in Q$ such that $u(y)$ is defined by \eqref{equ:u_definition}. Lemma 4.1 of \cite{Raj:16} shows that $u$ is well-defined in $Q$. 
	
	We recall Lemma 4.3 of \cite{Raj:16}, which states that $\rho$ is a weak upper gradient of $u$. More precisely, 
	\begin{equation} \label{equ:upper_gradient}
	|u(x) - u(y)| \leq \int_{\gamma} \rho\,ds
	\end{equation} 
	for all curves $\gamma$ in $Q$ with $\gamma \notin \Gamma_0$. In particular, $u$ is absolutely continuous along any curve $\gamma \notin \Gamma_0$. We also recall Lemma 4.5 of \cite{Raj:16}, where it is shown that $0 \leq u(x) \leq 1$ for all $x \in Q$. It follows from \eqref{equ:u_definition} that if $x \in \zeta_3$ lies in the image of a curve $\gamma \in \Gamma_1 \setminus \Gamma_0$, then $u(x) \geq 1$ and thus $u(x) = 1$.
	
	As final points of notation, for a set $A \subset Q$, let $\osc_{A} u = \sup_{x,y \in A} |u(x) - u(y)|$. Let $|\gamma|$ denote the image of the curve $\gamma$ in $Q$.
	
	To study the harmonic function $u$, there are three auxiliary results which are employed repeatedly in \cite{Raj:16} and which we state here for easy reference. The first concerns the existence of rectifiable curves and can be found as Proposition 15.1 of \cite{Sem:96c}.
	
	\begin{prop} \label{prop:existence_paths}
		Let $x,y \in X$ be given, $x \neq y$. Suppose that $E \subset X$ is a continuum with $\mathcal{H}^1(E) < \infty$ and $x, y \in E$. Then there is an $L>0$, $L \leq \mathcal{H}^1(E)$, and an injective 1-Lipschitz mapping $\gamma\colon [0,L] \rightarrow X$ such that $\gamma(t) \in E$ for all $t$, $\gamma(0) = x$, $\gamma(L) = y$ and $\mathcal{H}^1(\gamma(F)) = \mathcal{H}^1(F)$ for all measurable sets $F \subset [0,L]$. 
	\end{prop}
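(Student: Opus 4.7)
The plan is to build $\gamma$ by first extracting a topological arc inside $E$ joining $x$ to $y$, then reparametrizing it by arclength. The key input is the classical fact that a continuum $E$ with $\mathcal{H}^1(E) < \infty$ is arcwise connected: any two of its points are joined by an arc, i.e.\ an injective continuous image of $[0,1]$, contained in $E$. This goes back to results of Eilenberg--Harrold and is standard in one-dimensional geometric measure theory; the idea is that a failure of arcwise connectedness at some point would produce a Cantor-like family of disjoint subcontinua, forcing $\mathcal{H}^1(E) = \infty$.

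Given such an arc $A \subset E$ from $x$ to $y$, the next step is to control its length. For any arc $A$ in a metric space one has $\mathcal{H}^1(A) = \ell(A)$, the length of $A$; in particular $\ell(A) \leq \mathcal{H}^1(E) < \infty$, so $A$ is rectifiable. A rectifiable arc admits an injective 1-Lipschitz arclength parametrization $\gamma\colon [0,L] \to X$ with $L = \ell(A) \leq \mathcal{H}^1(E)$, $\gamma(0) = x$, $\gamma(L) = y$, and image $A \subset E$.

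For the measure-preservation property, the 1-Lipschitz condition immediately gives $\mathcal{H}^1(\gamma(F)) \leq \mathcal{H}^1(F)$ for every measurable $F \subset [0,L]$. For the reverse inequality, the arclength parametrization yields $\mathcal{H}^1(\gamma([s,t])) = t-s$ on each interval, since $\gamma|_{[s,t]}$ has length exactly $t-s$ and for an arc this length coincides with its Hausdorff 1-measure. This extends from intervals to arbitrary measurable sets by $\sigma$-additivity combined with the injectivity of $\gamma$, which ensures that disjoint sets are mapped to disjoint sets. The main obstacle is the arcwise connectedness invoked in the first step; the argument is classical but nontrivial, and since the proposition is quoted from \cite{Sem:96c}, I would rely on that reference for the existence of the arc and concentrate the writing on the Lipschitz reparametrization together with the measure computation described above.
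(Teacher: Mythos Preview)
The paper does not supply its own proof of this proposition: it is stated as an auxiliary fact and attributed to Proposition~15.1 of \cite{Sem:96c}. Your outline is a correct sketch of the standard argument behind that result---arcwise connectedness of continua of finite $\mathcal{H}^1$-measure, followed by arclength reparametrization and the identification $\mathcal{H}^1(A)=\ell(A)$ for arcs---and you yourself note that the arcwise-connectedness step would be delegated to the cited reference. So there is nothing to compare: the paper omits the proof entirely, and your proposal reproduces the classical route that underlies the cited result.
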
 
	
	The next is the standard coarea inequality for Lipschitz functions on metric spaces, found in \cite[Proposition 3.1.5]{AmbTil:04}. 
	
	\begin{prop}[Coarea inequality] \label{prop:coarea}
		Let $A \subset X$ be Borel measurable. If $m\colon X \rightarrow \mathbb{R}$ is $L$-Lipschitz and $g\colon A \rightarrow [0, \infty]$ is Borel measurable, then
		$$\int_{\mathbb{R}} \int_{A \cap m^{-1}(t)} g(s)\, d\mathcal{H}^1(s)\, dt \leq \frac{4L}{\pi} \int_A g(x)\, d\mathcal{H}^2(x).$$
	\end{prop}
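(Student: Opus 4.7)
The plan is to reduce to the indicator case $g = \chi_A$ and then extend by a standard monotone-class argument, so the heart of the matter is proving
\[
\int_{\mathbb{R}} \mathcal{H}^1(A \cap m^{-1}(t))\, dt \leq \frac{4L}{\pi} \mathcal{H}^2(A).
\]

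First I would work at the level of Hausdorff $\varepsilon$-content and pass to a limit at the end. Fix $\varepsilon > 0$ and let $\{E_j\}$ be any countable cover of $A$ with $\diam E_j < \varepsilon$. Since $m$ is $L$-Lipschitz, $m(E_j)$ is contained in an interval of length at most $L \diam E_j$, and for each $t$ the subfamily $\{E_j : t \in m(E_j)\}$ covers $A \cap m^{-1}(t)$ by sets of diameter at most $\diam E_j$. Using $a_1 = 1$ from the definition of $\mathcal{H}^1$, this yields
\[
\mathcal{H}^1_\varepsilon(A \cap m^{-1}(t)) \leq \sum_j \diam(E_j)\, \chi_{m(E_j)}(t).
\]
Integrating in $t$, using $\mathcal{L}^1(m(E_j)) \leq L \diam E_j$, and pulling out the normalizing constant $\pi/4$ from the definition of $\mathcal{H}^2$ gives
\[
\int_{\mathbb{R}} \mathcal{H}^1_\varepsilon(A \cap m^{-1}(t))\, dt \leq L \sum_j (\diam E_j)^2 = \frac{4L}{\pi} \sum_j \frac{\pi}{4}(\diam E_j)^2.
\]
Taking the infimum over all admissible covers turns the right side into $(4L/\pi)\mathcal{H}^2_\varepsilon(A)$, and letting $\varepsilon \to 0$ gives the content-level inequality by monotone convergence applied on both sides.

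To treat arbitrary nonnegative Borel $g$, I would approximate $g$ from below by increasing simple functions $g_n = \sum_i c_i^{(n)} \chi_{A_i^{(n)}}$, apply the indicator case to each Borel set $A_i^{(n)}$ by linearity, and invoke monotone convergence on both sides of the resulting inequality to recover the full statement. The main technical obstacle in the whole argument, and essentially the only point that is not a formal manipulation, is ensuring Borel measurability of the slice integrals $t \mapsto \int_{A \cap m^{-1}(t)} g\, d\mathcal{H}^1$ so that the outer integral is well defined. At the content level this is transparent, since the majorant is a countable sum of indicator functions of Borel sets; for the Hausdorff measure statement one appeals to the standard fact, proved by upper approximation through $\mathcal{H}^1_\varepsilon$, that $t \mapsto \mathcal{H}^1(A \cap m^{-1}(t))$ is Borel when $A$ is Borel and $m$ is Lipschitz.
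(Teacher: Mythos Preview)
The paper does not supply a proof of this proposition at all; it is quoted as a known result with the citation \cite[Proposition 3.1.5]{AmbTil:04}. Your argument is precisely the standard proof given in that reference: cover $A$ by small sets, bound the $\mathcal{H}^1_\varepsilon$-content of each slice by the sum of diameters of the covering sets that meet that slice, integrate in $t$ using the Lipschitz bound $\mathcal{L}^1(m(E_j))\le L\diam E_j$, infimize over covers, and pass to the limit $\varepsilon\to 0$; the extension to general $g$ by simple approximation and monotone convergence is routine. So there is nothing to compare---your proposal is correct and coincides with the cited argument. The only delicate point, which you already flag, is the measurability of $t\mapsto \mathcal{H}^1(A\cap m^{-1}(t))$; one clean way to sidestep it is to replace each $m(E_j)$ by a closed interval of length $L\diam E_j$ containing it (so the majorant is genuinely Borel) and then observe that $t\mapsto\mathcal{H}^1_\varepsilon(A\cap m^{-1}(t))$, being a decreasing limit of infima of countable sums of such Borel functions, is itself Borel, after which monotone convergence in $\varepsilon$ is legitimate.
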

	
	We also need a topological lemma, cf. \cite[IV Theorem 26]{Moo:62}. 
	\begin{lemm}\label{lemm:separating_continuum}
		Let $A,B \subset Q$ be non-empty sets, and let $K \subset Q$ be a compact set such that $A$ and $B$ belong to different components of $Q \setminus K$. Then there is a continuum 
		$F \subset K$ such that $A$ and $B$ belong to different components of $Q \setminus F$. Moreover, if $\mathcal{H}^1(K) < \infty$ and the component of $Q \setminus K$ containing $A$ is contained in the interior of $Q$, then $F$ may be taken to be the image of an injective Lipschitz mapping $\gamma: \mathbb{S}^1 \rightarrow K$.
	\end{lemm}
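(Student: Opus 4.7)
For the first assertion I would invoke the cited classical separation theorem of Moore directly: since $Q$ is homeomorphic to a topological disk and the compact set $K$ separates $A$ from $B$ in $Q$, there exists a continuum $F \subset K$ that separates $A$ from $B$. For a self-contained treatment one may take $U$ to be the component of $Q \setminus K$ containing $A$ and set $F = \partial \hat U$, where $\hat U$ is the union of $U$ with every component of $Q \setminus \overline U$ whose closure is disjoint from $B$; the simple-connectedness of $Q$ forces $\partial \hat U \subset \partial U \subset K$ to be connected.

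For the moreover part, assume $\mathcal{H}^1(K) < \infty$ and $U \subset \interior(Q)$, where $U$ is the component of $Q \setminus K$ containing $A$. Let $F_0 \subset K$ be the continuum produced by the first part; then $\mathcal{H}^1(F_0) < \infty$. A preliminary reduction shows that $\overline U$ is compactly contained in $\interior(Q)$, using $\overline U \subset U \cup \partial U \subset U \cup K$ and the hypothesis $U \subset \interior(Q)$. Identifying $\interior(Q)$ with $\mathbb{R}^2$, let $\Omega$ denote the unbounded component of $\mathbb{R}^2 \setminus \overline U$. The boundary $\partial \Omega$ is a continuum contained in $\partial U \subset K$, so $\mathcal{H}^1(\partial \Omega) < \infty$, and $\partial \Omega$ separates $A$ (inside $\overline U$) from $\infty$ in $\mathbb{R}^2$, hence also separates $A$ from $B$ in $Q$, possibly after enlarging $\partial \Omega$ by the analogous outer boundaries of any bounded components of $\mathbb{R}^2 \setminus \overline U$ in which $B$ lies. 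Standard planar topology --- for instance, the Carath\'eodory prime-end extension of the Riemann map $\mathbb{D} \to \Omega$ --- isolates a Jordan curve $J \subset \partial \Omega$, the accessible outer boundary of $\overline U$, that still separates $A$ from $B$. Being a topological circle of finite $\mathcal{H}^1$-measure, $J$ admits an arc-length parametrization by an injective $1$-Lipschitz map $\gamma: \mathbb{S}^1 \to K$, as required.

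The main obstacle is the extraction of the Jordan curve $J$. A continuum of finite Hausdorff $1$-measure bounding a simply connected planar domain need not itself be a Jordan curve --- it may carry tree-like ``hairs'' that are accessible from only one side of $\Omega$ --- but such features play no role in the separation of $A$ from $B$. Carath\'eodory's accessibility theory identifies the relevant Jordan sub-curve; alternatively, one can argue in the spirit of Proposition \ref{prop:existence_paths} by parametrizing the boundary arcs between consecutive accessible prime ends and gluing them cyclically. Either route depends crucially on finite $\mathcal{H}^1$-measure: it guarantees that the selected sub-curve is rectifiable, so that arc-length parametrization furnishes the desired Lipschitz loop.
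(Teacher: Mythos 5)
The paper offers no proof of this lemma; it simply cites \cite[IV Theorem 26]{Moo:62} and uses the statement as a black box, so there is no in-paper argument to compare against. Your plan for the first assertion (pass to the $A$-component $U$, fill in the complementary components not meeting $B$, and take $F = \partial\hat U$) is the standard way this is done and is consistent with Moore's theorem.

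The ``moreover'' part contains a genuine gap. You assert as a ``preliminary reduction'' that $U \subset \interior(Q)$ forces $\overline U$ to be compactly contained in $\interior(Q)$, via $\overline U \subset U \cup K$; but $K$ is only compact in $Q$ and may well meet $\partial Q$, so this inference is invalid. A concrete counterexample: take $Q = \overline{\mathbb D}$ and $K = \{\,z : |z - 1/2| = 1/2\,\}$, the circle internally tangent to $\partial Q$ at $1$. Then $U = \{\,z : |z-1/2| < 1/2\,\}$ is a component of $Q\setminus K$ with $U \subset \interior(Q)$, yet $1 \in \overline U \cap \partial Q$. Once $\overline U$ can touch $\partial Q$, your identification of $\interior(Q)$ with $\mathbb R^2$ sends $\overline U$ to an unbounded closed set, and the phrase ``unbounded component of $\mathbb R^2 \setminus \overline U$'' no longer isolates the exterior domain you need; the subsequent Riemann-map/prime-end step is then working with the wrong $\Omega$. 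The correct setting is to stay inside the closed disk $Q$ (or its one-point compactification together with a disk glued along $\partial Q$), where $\overline U$ is automatically compact. Separately, extracting a Jordan curve from $\partial\Omega$ requires a bit more than Carath\'eodory continuity: $\partial\Omega$ having finite $\mathcal H^1$ does give local connectedness and hence a continuous boundary extension of the Riemann map, but you must still argue that the outer boundary (equivalently, the boundary of the filled-in set $\hat U$) is a Jordan curve or contains one that separates; the gluing-of-arcs route you mention in the last sentence, using Proposition~\ref{prop:existence_paths}, is the more robust way to finish.
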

	
	\section{Level sets of $u$} \label{sec:level_sets}
	
	In this section, we prove a number of topological properties for the level sets of the harmonic function $u$, or, more precisely, for the closure of these level sets. This section can be viewed as an extension of Section 4 in \cite{Raj:16}, which also studies those properties of $u$ which can be proved without any use of the reciprocality conditions.
	
	The primary technical difficulty we must deal with is that, without assuming the reciprocality conditions, we do not know {\it a priori} that the function $u$ is continuous. However, it is shown in Lemma 4.6 of \cite{Raj:16} that $u$ satisfies a maximum and a minimum principle. To state it, we use the following notation. For an open set $\Omega \subset X$, or a relatively open set $\Omega \subset Q$, let
	$$\partial_* \Omega = (\partial \Omega \cap Q) \cup (\overline{\Omega} \cap (\zeta_1 \cup \zeta_3)) .$$
	Then we have the following.
	
	\begin{lemm}[ Maximum principle] \label{lemm:maximum_principle} 
		Let $\Omega \subset X$ be open. Then $\sup_{x \in \Omega \cap Q} u(x) \leq \sup_{y \in \partial_*\Omega} u(y)$ and $\inf_{x \in \Omega \cap Q} u(x) \geq \inf_{y \in \partial_*\Omega} u(y)$.
	\end{lemm}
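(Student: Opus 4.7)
My plan is to derive a contradiction from the $L^2$-minimality of $\rho$. I will prove the supremum inequality; the infimum case is dual. Let $M = \sup_{y \in \partial_*\Omega} u(y)$. Since $u \le 1$ on $Q$ we may assume $M < 1$, which forces $\overline{\Omega} \cap \zeta_3 = \emptyset$ (otherwise $\zeta_3 \subset \partial_*\Omega$ gives $M \ge 1$). Suppose for contradiction that some $x_0 \in \Omega \cap Q$ satisfies $u(x_0) > M$, and fix $t \in (M, u(x_0))$.

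The key construction is the truncation together with a restriction of $\rho$:
\[
v(x) = \begin{cases} \min(u(x), t) & x \in \Omega \cap Q, \\ u(x) & x \in Q \setminus \Omega, \end{cases}
\qquad \tilde\rho = \rho\, \mathbf{1}_{X \setminus (\Omega \cap \{u > t\})}.
\]
Since $u \le M < t$ on $\partial\Omega \cap Q \subset \partial_*\Omega$, the two expressions for $v$ agree continuously at boundary crossings when traversed along any $\gamma \in \Gamma_1 \setminus \Gamma_0$. Using the absolute continuity of $u \circ \gamma$ from \eqref{equ:upper_gradient} together with a case analysis over the maximal subintervals of $\gamma^{-1}(\Omega \cap \{u > t\})$ and its complement, I expect to show that $\tilde\rho$ is a weak upper gradient of $v$ along $\gamma$. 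Combined with $v = 0$ on $\zeta_1$ and $v = 1$ on $\zeta_3$ (the latter using $\overline\Omega \cap \zeta_3 = \emptyset$), this gives $\int_\gamma \tilde\rho \, ds \ge 1$, so $\tilde\rho$ is weakly admissible for $\Gamma_1$.

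The minimality of $\rho$ then yields
\[
\int \rho^2 \, d\mathcal{H}^2 \le \int \tilde\rho^2 \, d\mathcal{H}^2 = \int \rho^2 \, d\mathcal{H}^2 - \int_{\Omega \cap \{u > t\}} \rho^2 \, d\mathcal{H}^2,
\]
forcing $\rho = 0$ $\mathcal{H}^2$-almost everywhere on $\Omega \cap \{u > t\}$. By Fuglede's lemma, the integral $\int_\gamma \rho\, \mathbf{1}_{\Omega \cap \{u > t\}} \, ds$ vanishes for modulus-almost-every $\gamma$.

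To conclude, I select $x_0 \in E \cap \Omega \cap Q$ with $u(x_0) > t$ (using the liminf definition of $u$ and openness of $\Omega$) and a curve $\gamma \in \Gamma_1 \setminus \Gamma_0$ through $x_0$ that also lies outside the above Fuglede exceptional family. Parametrizing $\gamma$ by arclength with $\gamma(t_0) = x_0$, let $s_1 = \sup\{s \in [0,t_0] : u(\gamma(s)) \le t\}$. Continuity of $u \circ \gamma$ gives $u(\gamma(s_1)) = t$ and $u(\gamma(s)) > t$ on $(s_1, t_0]$; moreover $\gamma((s_1, t_0]) \subset \Omega$, for otherwise some $\gamma(s) \in \partial\Omega \cap Q$ would force $u(\gamma(s)) \le M < t$. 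Applying \eqref{equ:upper_gradient} to $\gamma|_{[s_1, t_0]}$ yields
\[
u(x_0) - t \le \int_{s_1}^{t_0} \rho(\gamma(s)) \, ds = 0,
\]
contradicting $u(x_0) > t$. The main technical obstacle I anticipate is the verification that $\tilde\rho$ is a weak upper gradient of $v$ along modulus-a.e.~curve, which hinges on the bound $u \le M < t$ on $\partial\Omega \cap Q$ to rule out discontinuities of $v$ at boundary crossings, together with confirming that the curve $\gamma$ through $x_0$ can be chosen to avoid both the original family $\Gamma_0$ and the Fuglede-exceptional family simultaneously.
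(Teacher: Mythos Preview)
The paper does not supply a proof of this lemma at all; it is quoted from \cite{Raj:16} (Lemma~4.6 there), so there is no in-paper argument to compare against.

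Your variational approach via truncation and $L^{2}$-minimality of $\rho$ is a natural route, but two of the obstacles you flag are genuine and not merely cosmetic. First, admissible functions in the definition of modulus must be Borel, so for $\tilde\rho=\rho\,\mathbf{1}_{X\setminus(\Omega\cap\{u>t\})}$ to compete you need $\{u>t\}$ to be measurable; at this stage of the development $u$ is not known to be continuous, and its measurability is not established in the paper. Second, and more seriously, the final contradiction requires a curve $\gamma\in\Gamma_{1}$ through a point $x_{0}\in\Omega$ with $u(x_{0})>t$ that avoids \emph{both} $\Gamma_{0}$ and the Fuglede-exceptional family $\Gamma_{F}$ produced after you already fixed $t$. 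Membership in $E$ only guarantees a curve outside $\Gamma_{0}$. One can try to repair this by working with the enlarged null family $\Gamma_{0}\cup\Gamma_{F}$ and the corresponding dense set $E'\subset E$; for $x_{0}'\notin E$ the $\liminf$ over $E'$ dominates that over $E$ and the argument goes through, but for $x_{0}'\in E\setminus E'$ there is no obvious reason why nearby points of $E'$ should still satisfy $u>t$, since $u$ is not known to be upper semicontinuous. This gap needs a genuine additional idea. A smaller point: $M<1$ does not literally force $\overline{\Omega}\cap\zeta_{3}=\emptyset$ (one only knows $u=1$ at $\zeta_{3}$-points lying on good curves), but what your admissibility argument actually requires---that the $\zeta_{3}$-endpoint of each $\gamma\in\Gamma_{1}\setminus\Gamma_{0}$ lies outside $\overline{\Omega}$---does follow, since such an endpoint has $u=1$ and would otherwise belong to $\partial_{*}\Omega$.
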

	
	Lemma \ref{lemm:maximum_principle} allows us to establish topological properties for the closures of sets of the form $u^{-1}([s,t])$. 
	
	\begin{prop}
		\label{prop:connect}
		For all $s, t \in [0,1]$, $s \leq t$, the set $\overline{u^{-1}([s,t])}$ is connected and intersects both $\zeta_2$ and $\zeta_4$.
	\end{prop}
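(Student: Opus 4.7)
The plan is to prove both assertions by contradiction, using as key tools the absolute continuity of $u$ along curves outside $\Gamma_0$, the intermediate value theorem applied to $u$ along such curves, and the maximum principle (Lemma \ref{lemm:maximum_principle}). Throughout, write $K := \overline{u^{-1}([s, t])}$.

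For the intersection claim, suppose $K \cap \zeta_2 = \emptyset$. Then some open neighborhood $N$ of $\zeta_2$ in $Q$ is disjoint from $K$. The family of curves in $\Gamma_1$ that lie inside $N$ together with short pieces along $\zeta_1$ and $\zeta_3$ has positive $2$-modulus, so contains some $\gamma \notin \Gamma_0$. Absolute continuity of $u$ along $\gamma$, with boundary values $0$ and $1$, forces $\gamma$ to meet $u^{-1}([s, t]) \subset K$ by the intermediate value theorem, contradicting $|\gamma| \cap K = \emptyset$. The case of $\zeta_4$ is identical.

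For connectedness, suppose $K = K_1 \sqcup K_2$ with both nonempty, closed, and disjoint. The analogous modulus/intermediate-value argument shows that $K$ separates $\zeta_1$ from $\zeta_3$ in $Q$: otherwise, a path-component of $Q \setminus K$ would meet both arcs and would support a curve $\gamma \in \Gamma_1 \setminus \Gamma_0$ avoiding $K$. Since $Q$ is a topological disk, a closed set separating opposite sides contains a connected separating subset; that subset lies in a single $K_i$, so after relabeling $K_1$ itself separates $\zeta_1$ from $\zeta_3$, and $K_2$ is contained in the union of the components of $Q \setminus K_1$, each of which meets at most one of $\zeta_1, \zeta_3$. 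Applying Lemma \ref{lemm:separating_continuum} inside any component $W$ of $Q \setminus K_1$ intersecting $K_2$, I would produce an injective Lipschitz curve $\sigma \subset Q \setminus K$ bounding a relatively compact region $\Omega \subset W$ which contains the portion of $K_2$ in $W$ and whose closure avoids $\zeta_1 \cup \zeta_3$. Since $\sigma$ avoids $K$ and may be taken outside $\Gamma_0$, $u$ is continuous along $\sigma$ and avoids $[s, t]$; by connectedness of $\sigma$, either $u < s$ or $u > t$ uniformly on $\sigma$.

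The maximum principle applied to $\Omega$ (with $\partial_* \Omega \subset \sigma$ by construction) then yields $u \leq s$ throughout $\Omega$ in the first case, or $u \geq t$ throughout $\Omega$ in the second. Combined with the density of $K_2 \cap u^{-1}([s, t])$ in $K_2$, and a careful iteration with slightly perturbed intervals $[s + \delta, t]$ or $[s, t - \delta]$, this yields a contradiction, completing the proof. The main obstacle is the topological setup: producing $\sigma$ and the enclosed region $\Omega$ so that $\partial_* \Omega \subset \sigma$ and the maximum principle can be applied cleanly requires careful treatment of the cases where $K_1$ or $K_2$ meets $\partial Q$; the lack of known continuity of $u$ is handled precisely by using the Lipschitz curve $\sigma$ from Lemma \ref{lemm:separating_continuum} to translate the pointwise but non-uniform behavior of $u$ on $Q \setminus K$ into the uniform bound along $\sigma$ needed for the maximum principle.
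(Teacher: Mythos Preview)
Your argument has two related gaps. First, for the intersection claim you assert that the subfamily of $\Gamma_1$ lying inside a neighborhood $N$ of $\zeta_2$ has positive $2$-modulus; this is unjustified, and on a general metric surface (with no reciprocality assumed) there is no reason it should be positive. You therefore cannot extract a curve outside $\Gamma_0$ in this way, and the same unjustified positivity is invoked again when you argue that $K$ separates $\zeta_1$ from $\zeta_3$. Second, to obtain an injective \emph{Lipschitz} curve $\sigma$ from Lemma~\ref{lemm:separating_continuum} you must first supply a compact separating set of finite $\mathcal{H}^1$-measure; you have not done so (the sets $K$, $K_1$, $K_2$ need not have finite $\mathcal{H}^1$), nor have you explained why $\sigma$ can be chosen outside $\Gamma_0$. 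The concluding ``iteration with perturbed intervals'' is also left vague, though this is minor compared to the above.

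The paper supplies exactly the missing device: it applies the coarea inequality (Proposition~\ref{prop:coarea}) to a Lipschitz distance function, namely $h(x)=\dist(x,\partial U)$ for connectedness and $g(x)=\dist(x,\zeta_4)$ for the intersection claim. For almost every level $p$ the set $h^{-1}(p)$ then has finite $\mathcal{H}^1$ and every rectifiable curve contained in it lies outside $\Gamma_0$. Lemma~\ref{lemm:separating_continuum} and Proposition~\ref{prop:existence_paths} applied to this level set produce the required simple separating curve $\gamma \notin \Gamma_0$, and one finishes with a case analysis on where the endpoints of $\gamma$ land on $\partial Q$, using the maximum principle and the continuity of $u$ along $\gamma$ much as you outline. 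Your overall strategy (separating curve, continuity of $u$ along it, maximum principle) is sound; what is missing is this coarea-on-distance-function trick to manufacture the curve without any appeal to modulus positivity.
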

	\begin{proof}
		Let $E = \overline{u^{-1}([s,t])}$. To prove the first claim, suppose that $E$ is not connected. Then there is an open set $U \subset X$ such that 
		\begin{equation}
		\label{sussa}
		U \cap E \neq \emptyset, \quad (Q \setminus U) \cap E \neq \emptyset, \quad \partial U \cap E = \emptyset. 
		\end{equation}
		Let $E_1 = U \cap E$ and $E_2 = (Q \setminus U) \cap E$. By passing to a subset if needed, we may assume that $E_1$ and $E_2$ are each contained within a single component of $U$ and $Q \setminus \overline{U}$, respectively.  We fix $\varepsilon >0$ such that $\operatorname{dist}(\partial U,E) > \varepsilon$. By Proposition \ref{prop:coarea} applied to $h(x)=\operatorname{dist}(\partial U,x)$, there is $0<p<\varepsilon$ 
		such that $\mathcal{H}^1(h^{-1}(p))< \infty$ and every rectifiable curve $\gamma$ for which $|\gamma| \subset h^{-1}(p)$ lies outside the exceptional set $\Gamma_0$. By \eqref{sussa} and our 
		choice of $p$, the sets $E_1$ and $E_2$ belong to different components of $Q \setminus h^{-1}(p)$. Lemma \ref{lemm:separating_continuum} then shows that $h^{-1}(p)$ has a connected subset $F \subset Q$ such that $E_1$ and $E_2$ belong to different components of $Q \setminus F$. Notice that, for every rectifiable curve $\gamma$ with 
		$|\gamma| \subset F$, $u||\gamma|$ is continuous and either $u(x) < s$ or $u(x)>t$ for all $x \in |\gamma|$. We divide the rest of the proof into cases. 
		
		\begin{case}\label{pring}
			Suppose there is an open set $G \subset X$ such that $\partial G \subset F$ and $E_j \subset G$ for $j=1$ or $j=2$. By Lemma \ref{lemm:maximum_principle} there are $x_0,x_1 \in G$ such that $u(x_0)\leq s$ and $u(x_1)\geq t$. Moreover, by Proposition \ref{prop:existence_paths} there is a rectifiable curve $\gamma$ joining $x_0$ and $x_1$ in $F$. Since $u||\gamma|$ is continuous, we conclude that $u(x) \in E$ for some $x \in |\gamma|$. This is a contradiction, since $E \cap F = \emptyset$. 
		\end{case}
		
		Suppose next that the set $G$ in Case \ref{pring} does not exist. We then find a subcontinuum $F'$ of $F$ with the following properties: $F' \cap \partial Q$ consists of two distinct points $x_0$ and $x_1$, and $E_1$ and $E_2$ belong to different components, say $\Omega_1$ and $\Omega_2$, of $X \setminus (\partial Q \cup F')$. By Proposition \ref{prop:existence_paths} we may moreover assume that $F'=|\gamma|$, where $\gamma:[0,1]\to Q$ is simple and rectifiable, and $\gamma(0)=x_0$, $\gamma(1)=x_1$. 
		
		\begin{case}
			Suppose that both $x_0$ and $x_1$ belong to $\zeta_j$ for some $j=1,\ldots,4$. Then $\partial \Omega_k \subset |\gamma| \cup \zeta_j$ for $k=1$ or $k=2$. As in Case \ref{pring}, Lemma \ref{lemm:maximum_principle} and the continuity of $u||\gamma|$ show that there exists $x \in |\gamma|$ such that $u(x) \in [s,t]$. This contradicts the construction of $\gamma$. A similar argument can be applied when  
			$x_0 \in \zeta_i$ and $x_1 \in \zeta_j$, where either $i \in \{1,3\}$ and $j \in \{2,4\}$, or $j \in \{1,3\}$ and $i \in \{2,4\}$. 
		\end{case}
		
		\begin{case}
			Suppose that $x_0 \in \zeta_1$ and $x_1 \in \zeta_3$. Then, since $\gamma \notin \Gamma_0$, the construction of $u$ shows that $u||\gamma|$ takes all values between $0$ and $1$. In particular, 
			$u(x) \in [s,t]$ for some $x \in |\gamma|$. This contradicts the fact that $|\gamma| \cap E = \emptyset$. The argument remains valid if the roles of $x_0$ and $x_1$ are reversed. 
		\end{case}
		
		\begin{case}
			Suppose that $x_0 \in \zeta_2$ and $x_1 \in \zeta_4$. Without loss of generality, we may assume that $\Omega_1$ is the component containing $\zeta_1$. It then follows from Lemma 
			\ref{lemm:maximum_principle} that $u(x) \geq s$ for some $x \in |\gamma|$. Moreover, since $u||\gamma|$ is continuous and $|\gamma| \cap E = \emptyset$, it follows that in fact $u(x) >t$ for every $x \in |\gamma|$. Similarly, applying Lemma \ref{lemm:maximum_principle} to $\Omega_2$ shows that $u(x) < s$ for every $x \in |\gamma|$. This is a contradiction. The argument remains valid if the roles of $x_0$ and $x_1$ are reversed.   
		\end{case} 
		
		We conclude that the set $E$ is connected. It remains to show that $E$ intersects both $\zeta_2$ and $\zeta_4$. Suppose towards contradiction that this is not the case. We may assume without loss of generality that $E$ does not intersect $\zeta_4$. 
		Proposition \ref{prop:coarea} applied to $g(x)=\operatorname{dist}(\zeta_4,x)$ shows that there exists a small $p>0$ such that $\mathcal{H}^1(g^{-1}(p))< \infty$. Moreover, by Lemma \ref{lemm:separating_continuum} there is a continuum $F \subset g^{-1}(p)$ joining $\zeta_1$ and $\zeta_3$ in $Q \setminus E$. Proposition \ref{prop:existence_paths} gives a simple curve $\gamma$ such that $|\gamma| \subset F$ 
		also joins $\zeta_1$ and $\zeta_3$. As before, we may assume that $\gamma \notin \Gamma_0$ so that $u||\gamma|$ takes all values between $0$ and $1$. This is a contradiction since 
		$|\gamma| \cap E = \emptyset$. The proof is complete. 
	\end{proof}
	
	Next, we give a generalization of Lemma 15.6 in \cite{Raj:16}, with a corrected constant. The proof is essentially the same as the corresponding proof in \cite{Raj:16}.
	
	\begin{lemm}\label{lemm:oscillation} 
		Let $x \in Q$ and $r \in (0, r_0)$, where $r_0 = \min\{\diam \zeta_1, \diam \zeta_3\}/4$. Then 
		\begin{equation} \label{equ:oscillation_bound}
		r  \mathcal{H}^1(u(B(x,r) \cap Q))\leq \frac{4}{\pi} \int_{B(x,2r)} \rho\, d\mathcal{H}^2.
		\end{equation} 
		Moreover, if $U(x,r)$ is the $x$-component of  $B(x,r) \cap Q$, then  
		\begin{equation} \label{equ:oscillation_bound2}
		r  \osc_{U(x,r)} u \leq \frac{4}{\pi} \int_{B(x,2r)} \rho\, d\mathcal{H}^2.
		\end{equation} 
	\end{lemm}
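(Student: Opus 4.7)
The plan is to use the coarea inequality (Proposition~\ref{prop:coarea}) to choose a good radius $t^* \in (r, 2r)$ for which $S(x, t^*) \cap Q$ has finite $\mathcal{H}^1$-measure, carries rectifiable curves outside the exceptional family $\Gamma_0$, and has $\rho$-integral controlled by the coarea average over $t \in (r, 2r)$. The proof then extracts a rectifiable Lipschitz arc or Jordan curve in $S(x, t^*) \cap Q$ that topologically separates $U(x, r)$ from the rest of $Q$, and bounds the relevant oscillation of $u$ by invoking the maximum principle (Lemma~\ref{lemm:maximum_principle}) together with the upper gradient inequality \eqref{equ:upper_gradient} along the extracted curve.

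More precisely, applying Proposition~\ref{prop:coarea} to $h(y)=d(x,y)$ on $A = B(x,2r) \cap Q$ with $g = \rho$ and with $g = 1$, and invoking $\Mod \Gamma_0 = 0$ via a Fubini-type argument, yields for almost every $t \in (r, 2r)$ that $\mathcal{H}^1(S(x,t) \cap Q) < \infty$, that rectifiable curves in $S(x,t)\cap Q$ avoid $\Gamma_0$, and by averaging, the existence of $t^* \in (r, 2r)$ additionally satisfying
\[
\int_{S(x,t^*) \cap Q} \rho \, d\mathcal{H}^1 \le \frac{4}{\pi r} \int_{B(x, 2r)} \rho\, d\mathcal{H}^2.
\]
The hypothesis $r < r_0$ ensures $\zeta_1 \not\subset B(x, 2r)$, so picking $p \in \zeta_1 \setminus B(x, 2r)$, the set $S(x, t^*) \cap Q$ separates $U(x, r)$ from $p$ in $Q$. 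By Lemma~\ref{lemm:separating_continuum} (together with Proposition~\ref{prop:existence_paths}), one obtains an injective Lipschitz arc or Jordan curve $\eta$ whose image $F \subset S(x, t^*) \cap Q$ separates $U(x, r)$ from $p$, and by the choice of $t^*$, $\eta \notin \Gamma_0$.

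For \eqref{equ:oscillation_bound2}, let $\Omega$ be the component of $Q \setminus F$ containing $U(x, r)$. Then $\partial_* \Omega \subset F \cup ((\zeta_1 \cup \zeta_3) \cap \overline{\Omega})$; since $u$ is constant on each of $\zeta_1$ and $\zeta_3$ and (by a case analysis using the construction of $F$) these boundary values are already attained at endpoints of $\eta$ landing on $\partial Q$, Lemma~\ref{lemm:maximum_principle} gives $\osc_{U(x,r)} u \le \osc_\Omega u \le \osc_F u$. The upper gradient inequality \eqref{equ:upper_gradient} applied to subarcs of $\eta$ yields $\osc_F u \le \int_F \rho\, d\mathcal{H}^1$, and combining with the coarea selection proves \eqref{equ:oscillation_bound2}. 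For \eqref{equ:oscillation_bound}, observe that any value $s \in u(B(x, r) \cap Q) \cap (0, 1)$ attained at some $y \in B(x,r) \cap Q$ gives rise, via the continuum $\overline{u^{-1}([0,s])}$ of Proposition~\ref{prop:connect} which contains both $y$ and $\zeta_1 \not\subset B(x, 2r)$, to a crossing point of $S(x, t^*) \cap Q$; a symmetric argument with $\overline{u^{-1}([s, 1])}$ gives a second such point above $s$. Combined with semi-continuity of $u$ (inherited from the infimum construction in \eqref{equ:u_definition}) and the absolute continuity of $u$ along the Lipschitz arcs that decompose $S(x, t^*) \cap Q$ by Proposition~\ref{prop:existence_paths}, this shows $u(B(x, r) \cap Q) \subset u(F) \cup \{0, 1\}$ modulo $\mathcal{H}^1$-null sets, whence $\mathcal{H}^1(u(B(x, r) \cap Q)) \le \int_F \rho\, d\mathcal{H}^1$.

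The main technical obstacle is the case analysis in \eqref{equ:oscillation_bound2} when $\overline{U(x, r)}$ meets $\zeta_1 \cup \zeta_3$: here $\partial_* \Omega$ can contain arcs on which $u \in \{0, 1\}$, so one must verify that the endpoints of $\eta$ land on $\partial Q$ in just the right way that $\osc_F u$ absorbs these extremal contributions, rather than $\osc_{\partial_* \Omega} u$ being genuinely larger. The same care is needed in \eqref{equ:oscillation_bound} to rule out extra contributions from the exceptional values $\{0, 1\}$ beyond what is already controlled by $\osc_F u$.
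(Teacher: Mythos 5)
Your argument for inequality \eqref{equ:oscillation_bound2} is essentially the paper's: select a good radius $t^*\in(r,2r)$ by coarea, extract a separating curve on $S(x,t^*)\cap Q$, and combine the maximum principle with the upper gradient inequality along that curve. (The paper instead integrates the estimate over $s\in(r,2r)$ at the end rather than picking a single $t^*$ by averaging, but that is a cosmetic difference.)

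The real gap is in your treatment of \eqref{equ:oscillation_bound}. There $u(B(x,r)\cap Q)$ involves \emph{all} components of $B(x,r)\cap Q$, not just the $x$-component $U(x,r)$, but you construct only a single separating curve $F$ (separating $U(x,r)$ from a chosen $p\in\zeta_1$). Your workaround --- arguing that any value $s\in u(B(x,r)\cap Q)\cap(0,1)$ must also be ``crossed'' on $F$ via the continua $\overline{u^{-1}([0,s])}$ and $\overline{u^{-1}([s,1])}$, then invoking semi-continuity of $u$ --- does not go through. Three concrete problems: (i) a crossing point $z\in S(x,t^*)\cap\overline{u^{-1}([0,s])}$ need not satisfy $u(z)\le s$, since $u$ is only known to be continuous along curves outside $\Gamma_0$, not on $Q$, so the closure operation does not transfer $u$-bounds to $z$; (ii) ``semi-continuity of $u$'' is not a consequence of \eqref{equ:u_definition} that the paper establishes, and in fact continuity of $u$ is proved later (Theorem~\ref{thm:continuity}) \emph{using} Proposition~\ref{prop:coarea_u}, which rests on this very lemma, so any appeal to regularity of $u$ here risks circularity; (iii) Proposition~\ref{prop:connect} only guarantees that $\overline{u^{-1}([0,s])}$ meets $\zeta_2$ and $\zeta_4$, and the radius hypothesis bounds $\diam\zeta_1,\diam\zeta_3$ --- not $\zeta_2,\zeta_4$ --- so you cannot conclude that this continuum leaves $B(x,2r)$ and hence crosses $S(x,t^*)$. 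Relatedly, your claim that ``$u$ is constant on each of $\zeta_1$ and $\zeta_3$'' is not available: the paper only establishes $u=1$ (resp.\ $u=0$) at points of $\zeta_3$ (resp.\ $\zeta_1$) lying on curves in $\Gamma_1\setminus\Gamma_0$.

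The paper avoids all of this by a more systematic decomposition: for a good sphere $S(x,s)$, it lists \emph{all} components $V_j$ of $B(x,s)\cap Q$, applies Lemma~\ref{lemm:separating_continuum} to produce for each $V_j$ a simple curve $\gamma_j\subset S(x,s)$ enclosing a region $U_j\supset V_j$, and then uses the covering $B(x,r)\cap Q\subset\bigcup_j U_j$ to write $\mathcal{H}^1(u(B(x,r)\cap Q))\le\sum_j\diam u(U_j)$. The maximum principle is applied to each $U_j$, with the radius hypothesis used precisely to show that when $\partial_* U_j$ meets $\zeta_1$ or $\zeta_3$, so does $|\gamma_j|$, so that $\sup_{\partial_*U_j}|u(y)-u(z)|=\max_{|\gamma_j|}|u(y)-u(z)|\le\int_{\gamma_j}\rho$. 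Summing over $j$ and noting the $\gamma_j$ lie on the same sphere gives $\mathcal{H}^1(u(B(x,r)\cap Q))\le\int_{S(x,s)}\rho\,d\mathcal{H}^1$; integrating in $s$ finishes. You should replace your level-set/semicontinuity argument for \eqref{equ:oscillation_bound} with this component-by-component decomposition; once done, \eqref{equ:oscillation_bound2} follows from the same computation since $\osc_{U(x,r)}u\le\sum_j\diam u(U_j)$.
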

	\begin{proof}	
		By applying Proposition \ref{prop:coarea} to the function $d(\cdot,x)$ and arguing as in the first paragraph of the proof of Proposition \ref{prop:connect}, we see that for almost every $s \in (r,2r)$, the sphere $S(x,s)$ satisfies $\mathcal{H}^1(S(x,s)) < \infty$ and has the property that $\eta \notin \Gamma_0$ for every curve $\eta$ with $|\eta| \subset S(x,s) \cap Q$. Fix such an $s \in (r,2r)$.
			
			Then $B(x,s) \cap Q$ consists of countably many relatively open components $V_j$. By Lemma \ref{lemm:separating_continuum}, for such a component $V_j$ there is a simple curve $\gamma_j$ with $|\gamma_j| \subset S(x,s)$ that separates $Q$ into the relative components $U_j$ and $Q \setminus \overline{U}_j$, where $V_j \subset U_j$. Observe that either $\gamma_j$ is a closed curve, or the two endpoints of $\gamma_j$ are contained in $\partial Q$. 
			
			Since $B(x,r) \cap Q \subset \bigcup_j U_j$, we have
			$$\mathcal{H}^1(u(B(x,r) \cap Q)) \leq \sum_j \diam u(U_j). $$
			By the maximum principle Lemma \ref{lemm:maximum_principle},
			$$\diam u(U_j) \leq \sup_{y,z \in \partial_* U_j} |u(y) - u(z)|.$$
			By our assumption that $r \leq \min\{ \diam \zeta_1, \diam \zeta_3\}/4$, it follows that if $\zeta_1 \cap \partial_* U_j \neq \emptyset$, then there exists a point $z_1 \in |\gamma_j| \cap \zeta_1$. Indeed, if $y \in \zeta_1 \cap \partial_* U_j$, then $d(y,x) \leq 2r$. But by assumption, there exists $z \in \zeta_1$ such that $d(y,z) > 4r$. The triangle inequality gives $d(z,x) > 2r$, and in particular $z \notin \overline{U}_j$. Since $\gamma_j$ separates $Q$, we conclude there is a point $z_1 \in |\gamma_j| \cap \zeta_1$. In this case it follows that $0 = \inf_{z \in \partial_* U_j} u(z) = u(z_1) = \min_{z \in |\gamma_j|} u(z)$. On the other hand, if $\zeta_1 \cap \partial_* U_j = \emptyset$, then by Lemma \ref{lemm:maximum_principle} we again have $\inf_{z \in \partial_* U_j} u(z) = \min_{z \in |\gamma_j|} u(z)$.
			
			The same argument shows that if $\zeta_3 \cap \partial_* U_j \neq \emptyset$, then there exists $y_1 \in |\gamma_j| \cap \zeta_3$ such that $1 = \sup_{y \in \partial_* U_j} u(y) = u(y_1) = \max_{y \in \gamma_j} u(y)$. In general, we likewise have $\sup_{y \in \partial_* U_j} u(y) = \max_{y \in |\gamma_j|} u(y)$. This establishes the equality 
			$$\sup_{y,z \in \partial_* U_j} |u(y) - u(z)| = \max_{y,z \in |\gamma_j|} |u(y) - u(z)| .$$ 
			By the upper gradient inequality \eqref{equ:upper_gradient},
			$$\max_{y,z \in |\gamma_j|} |u(y) - u(z)| \leq \int_{\gamma_j} \rho\,d\mathcal{H}^1.$$
			Finally, combining the estimates gives
			$$\mathcal{H}^1(u(B(x,r) \cap Q)) \leq \sum_j \diam u(U_j) \leq \sum_j \int_{\gamma_j} \rho\, d\mathcal{H}^1 \leq \int_{S(x,s)} \rho\, d\mathcal{H}^1. $$
			Observe that this estimate is the same independent of our choice of $s$. Inequality \eqref{equ:oscillation_bound} then follows from integrating over $s$ from $r$ to $2r$ and applying Proposition \ref{prop:coarea}.
			
			The same argument also verifies inequality \eqref{equ:oscillation_bound2}, since for each choice of $s \in (r,2r)$ it holds that $\osc_{U(x,r)} u = \diam u(U(x,r)) \leq \sum_j \diam u(U_j)$. 
	\end{proof}
	
	Without assuming the reciprocality conditions, it is not clear that the function $u$ is continuous. Nevertheless, Lemma \ref{lemm:oscillation} implies a certain amount of continuity for $u$, as we show in the following corollary.
	
	\begin{cor} \label{cor:continuity}
		The function $u$ is continuous at $\mathcal{H}^2$-almost every $x \in Q$.
	\end{cor}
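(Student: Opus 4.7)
The plan is to deduce continuity of $u$ at $x$ from the oscillation estimate \eqref{equ:oscillation_bound2} by showing that its right-hand side tends to $0$ for $\mathcal{H}^2$-almost every $x \in Q$. The key geometric input is a standard upper density bound: since $Q$ is compact and $X$ has locally finite Hausdorff $2$-measure, $\mathcal{H}^2(Q) < \infty$, and a Vitali-type covering argument valid in any metric space gives
\begin{equation*}
\limsup_{r \to 0^+} \frac{\mathcal{H}^2(Q \cap B(x,r))}{r^2} < \infty \qquad \text{for } \mathcal{H}^2\text{-a.e. } x \in Q.
\end{equation*}

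Granting this, I would apply Cauchy--Schwarz to $\rho \in L^2(Q)$ to obtain
\begin{equation*}
\frac{1}{r}\int_{B(x,2r)} \rho\,d\mathcal{H}^2 \leq \frac{\mathcal{H}^2(Q \cap B(x,2r))^{1/2}}{r}\left(\int_{B(x,2r)}\rho^2\,d\mathcal{H}^2\right)^{1/2}.
\end{equation*}
For $\mathcal{H}^2$-a.e.\ $x \in Q$ the first factor stays bounded as $r \to 0^+$ by the density bound above, while the second factor tends to $0$ for \emph{every} $x \in Q$, since $\rho^2\,d\mathcal{H}^2$ is a finite Borel measure and $\mathcal{H}^2(\{x\}) = 0$. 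Inserting this estimate into \eqref{equ:oscillation_bound2} (taking $r < r_0$) shows that $\osc_{U(x,r)} u \to 0$ as $r \to 0^+$ for $\mathcal{H}^2$-a.e.\ $x \in Q$.

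Finally, I would convert the oscillation decay on the components $U(x,r)$ into honest pointwise continuity using local connectedness of $Q$ (a consequence of $Q$ being homeomorphic to $[0,1]^2$): each $U(x,r)$ is relatively open in $Q$ and contains $x$, hence contains $B(x,\delta) \cap Q$ for some $\delta = \delta(r) > 0$. Given $\varepsilon > 0$, I first choose $r$ small enough that $\osc_{U(x,r)} u < \varepsilon$ and then pick the corresponding $\delta$; this yields $|u(y) - u(x)| < \varepsilon$ for every $y \in B(x,\delta) \cap Q$. The main obstacle is securing the upper density estimate: without it, the $1/r$ prefactor in \eqref{equ:oscillation_bound2} could overwhelm the vanishing of the integral of $\rho$ over a shrinking ball (this is precisely where the mass upper bound $\mathcal{H}^2(B(x,r)) \leq Cr^2$ was used in \cite{Raj:16}). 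The resolution is that the density theorem is a classical result for Hausdorff measures in arbitrary metric spaces, so no global mass upper bound on $X$ is required.
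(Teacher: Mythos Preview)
Your proof is correct and follows essentially the same strategy as the paper: combine the oscillation bound \eqref{equ:oscillation_bound2} with the classical upper-density theorem for finite Borel measures in metric spaces (Federer, \S2.10.19) to force the right-hand side to zero for $\mathcal{H}^2$-a.e.\ $x$. The only cosmetic difference is that the paper applies the density bound directly to the measure $\rho\,d\mathcal{H}^2$ (obtaining $\limsup_{r\to 0} r^{-2}\int_{B(x,2r)}\rho\,d\mathcal{H}^2<\infty$ a.e.), whereas you split via Cauchy--Schwarz and apply it to $\mathcal{H}^2$ itself; your explicit appeal to local connectedness to pass from $\osc_{U(x,r)}u\to 0$ to continuity at $x$ is a detail the paper leaves implicit.
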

	\begin{proof}
		Inequality \eqref{equ:oscillation_bound2} implies that
		$$\limsup_{r \rightarrow 0}  \osc_{U(x,r)} u \leq \limsup_{r \rightarrow 0} \frac{4r}{\pi}\cdot \frac{1}{r^2} \int_{B(x,2r)} \rho\, d\mathcal{H}^2$$
		for all $x \in Q \setminus \partial Q$. Here, $U(x,r)$ is as in Lemma \ref{lemm:oscillation}.
		From basic properties of pointwise densities of measures (see \cite[Sec. 2.10.19(3)]{Fed:69}), the integrability of $\rho$ and local finiteness of $\mathcal{H}^2$ imply that $$ \limsup_{r \rightarrow 0} \frac{1}{r^2} \int_{B(x,2r)} \rho\, d\mathcal{H}^2 < \infty$$
		for $\mathcal{H}^2$-almost every $x \in Q$. The result follows by combining the estimates. 
	\end{proof}
	
	%%%%%%%%%%%%%%%%%%%%%%%%%%%%%%%%%%%%%%%%%%%%%%%%%%%%%%%%%%%%%%%%%%%%%%%%%%%%%%%%%%%%%%%%%%%%%%%%%%%%%%%%%%%%%%%%%%%%%%%
	%%%%%%%%%%%%%%%%%%%%%%%%%%%%%%%%%%%%%%%%%%%%%%%%%%%%%%%%%%%%%%%%%%%%%%%%%%%%%%%%%%%%%%%%%%%%%%%%%%%%%%%%%%%%%%%%%%%%%%%
	
	\section{Reciprocal lower bound} \label{sec:lower_bound} 
	
	This section is devoted to a proof of Theorem \ref{thm:main}. We first state and prove the coarea inequality mentioned above which constitutes the main technical contribution of this paper. This corresponds to Proposition 15.7 in \cite{Raj:16}, where a similar result is proved under the assumption that $X$ has the mass upper bound $\mathcal{H}^2(B(x,r)) \leq Cr^2$. The proof of Proposition \ref{prop:coarea_u}, like Proposition 15.7 in \cite{Raj:16}, is based on standard arguments such as that in \cite[Prop. 3.1.5]{AmbTil:04}. 
	
	\begin{prop}\label{prop:coarea_u}
		Let $u$ and $\rho$ be as above. For all Borel functions $g: Q \rightarrow [0,\infty]$, 
		$$
		\int_{[0,1]}^* \int_{\overline{u^{-1}(t)}} g\,d\mathcal{H}^1\,dt \leq 2000 \int_Q g\rho\, d\mathcal{H}^2. 
		$$
		Here $\int^*_A a(t)\, dt $ is the upper Lebesgue integral of $a$ over $A$ (see \cite[Sec. 2.4.2]{Fed:69}). 
	\end{prop}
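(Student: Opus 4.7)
My plan is to adapt the classical Ambrosio--Tilli proof of the coarea inequality (Proposition \ref{prop:coarea}) to this non-Lipschitz setting, using Lemma \ref{lemm:oscillation} as a substitute for the Lipschitz constant estimate. By standard simple-function and monotone-convergence arguments, and by replacing $\mathcal{H}^1$ with the Hausdorff content $\mathcal{H}^1_\delta$ and letting $\delta \to 0$ at the end, I would reduce to showing, for each open set $A \subset Q$ and each small $\delta>0$, an estimate
\[
\int_0^1 \mathcal{H}^1_{10\delta}\bigl(\overline{u^{-1}(t)} \cap A\bigr)\, dt \leq C \int_A \rho\, d\mathcal{H}^2
\]
with a universal constant $C$; the passage from indicators of open sets to general Borel $g \geq 0$ then follows by outer regularity of the finite measure $\nu := \rho\, d\mathcal{H}^2$, together with linearity and monotone convergence.

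For the reduced statement, I would cover $A$ by balls $B(x, r_x)$ with $r_x < \delta$ and $B(x, 10 r_x) \subset A$, and apply the basic $5r$-covering theorem to extract a pairwise disjoint subfamily $\{B(x_i, r_i)\}$ whose fivefold enlargements cover $A$. For each level $t$, a point of the closure $\overline{u^{-1}(t)}$ can lie in $B(x_i, 5 r_i)$ only if $t \in u(B(x_i, 5 r_i) \cap Q)$, so covering $\overline{u^{-1}(t)} \cap A$ by the enlarged balls that meet it yields
\[
\mathcal{H}^1_{10\delta}\bigl(\overline{u^{-1}(t)} \cap A\bigr) \leq \sum_i 10 r_i \cdot \chi_{u(B(x_i, 5 r_i) \cap Q)}(t).
\]
Integrating in $t$ and applying Lemma \ref{lemm:oscillation} at each ball (with radius $5r_i$) then gives
\[
\int_0^1 \mathcal{H}^1_{10\delta}\bigl(\overline{u^{-1}(t)} \cap A\bigr)\, dt \leq \frac{8}{\pi} \sum_i \nu\bigl(B(x_i, 10 r_i)\bigr).
\]

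The main obstacle is to bound this sum by a multiple of $\nu(A)$. Under the mass upper bound used in \cite[Prop.~15.7]{Raj:16} this follows from standard doubling arguments, but in our generality neither $\mathcal{H}^2$ nor $\nu$ need be doubling, so classical Vitali/Besicovitch covering theorems are unavailable in their usual form. I would circumvent this by selecting radii adaptively. Finiteness of $\nu$ alone forces that at $\nu$-almost every $x \in A$ there exist arbitrarily small $r>0$ with $\nu(B(x, 10 r)) \leq 2 \nu(B(x, r))$; otherwise iteration of the reverse inequality yields $\nu(B(x, 10^n r_0)) > 2^n \nu(B(x, r_0))$ for all $n$, contradicting $\nu(Q)<\infty$ unless $x \notin \operatorname{supp}\nu$. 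Restricting the cover to these ``good'' balls and using disjointness of the $B(x_i, r_i)$ yields $\sum_i \nu(B(x_i, 10 r_i)) \leq 2\sum_i \nu(B(x_i, r_i)) \leq 2\nu(A)$. The residual set of ``bad'' points lies in $A \setminus \operatorname{supp}\nu$; at each such point Lemma \ref{lemm:oscillation} gives $\mathcal{H}^1(u(B(x, r) \cap Q)) = 0$ for suitably small $r$, so the set of levels $t$ at which $\overline{u^{-1}(t)}$ meets such a ball has Lebesgue measure zero and contributes nothing to the upper integral on the left. Letting $\delta \to 0$ then yields the coarea inequality; the larger explicit constant $2000$ in the statement likely reflects a coarser covering in the actual proof rather than this adaptive device.
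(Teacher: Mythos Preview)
Your overall architecture---reduce to open sets, split into ``good'' points where a doubling-type inequality for $\nu=\rho\,d\mathcal{H}^2$ holds at some small scale and ``bad'' points where it fails at all small scales, then run a $5r$-cover plus Lemma~\ref{lemm:oscillation} on the good part---is exactly the paper's strategy. The good-point step is fine and matches Step~1 of the paper's proof.

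The gap is in your treatment of the bad set, and it stems from the constant~$2$. First, with threshold~$2$ the good set can be empty: already for $X=\mathbb{R}^2$ with $\rho\equiv 1$ one has $\nu(B(x,10r))=100\,\nu(B(x,r))$ for every $x$ and $r$, so every point is ``bad'' in your sense. Second, your justification that bad points lie outside $\operatorname{supp}\nu$ is incorrect: the failure of $\nu(B(x,10r))\le 2\nu(B(x,r))$ is assumed only for $r<\varepsilon_x$, so you cannot iterate upward to get $\nu(B(x,10^nr_0))>2^n\nu(B(x,r_0))$ for large $n$; iterating downward only gives $\nu(\{x\})=0$, which does not exclude $x\in\operatorname{supp}\nu$ (the Euclidean example again). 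Consequently the claim that $\mathcal{H}^1(u(B(x,r)\cap Q))=0$ near bad points is unfounded, and the bad set may well carry the entire coarea integral.

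The paper's fix is precisely quantitative: it takes the threshold to be $200$ rather than $2$. Then at a bad point one has $\nu(B(x,10^{-j}))\le 200^{-(j-k)}\nu(B(x,10^{-k}))$, and one covers the bad set $F_k$ by balls with $\sum r_m^2\lesssim \mathcal{H}^2(F_k)$. Using $r_m\sim 10^{-j_m}$, the level-set contribution is controlled by $\sum r_m^2(100/200)^{j_m}$, and the ratio $100/200<1$ (the $100$ coming from the area scaling $r^2$) forces this to vanish as the scale $j\to\infty$. This is Step~2 of the paper's proof; it is the reason the stated constant is $2000$ rather than the smaller number your good-point computation would suggest.
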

	\begin{proof}
		It suffices to consider the case where $g$ is a characteristic function, that is, $g = \chi_E$ for some Borel set $E \subset Q$. Moreover, we may assume that $E$ is open in $Q$. Indeed, for a Borel set $E$ we find open sets $U_j \supset E$, $U_{j+1} \subset U_j$, such that $\mathcal{H}^2(U_j) \to \mathcal{H}^2(E)$. Assuming the proposition for $g=\chi_{U_j}$, we have 
		\begin{eqnarray*}
			\int_{[0,1]}^* \int_{\overline{u^{-1}(t)}} \chi_E \,d\mathcal{H}^1\,dt  &\leq &\int_{[0,1]}^* \int_{\overline{u^{-1}(t)}} \chi_{U_j} \,d\mathcal{H}^1\,dt \leq 2000 \int_Q \chi_{U_j}\rho\, d\mathcal{H}^2 \\ 
			&\longrightarrow& 2000 \int_Q \chi_{E}\rho\, d\mathcal{H}^2. 
		\end{eqnarray*}
		So we want to show that 
		\begin{equation} \label{equ:set_E} 
		\int_{[0,1]}^* \mathcal{H}^1(\overline{u^{-1}(t)} \cap E) \,dt \leq 2000 \int_E \rho\,d\mathcal{H}^2 
		\end{equation}
		whenever $E$ is open in $Q$. The proof is divided into two steps, the first dealing with the subset of ``good'' points of $E$ and the second dealing with the subset of ``bad'' points. Throughout this proof, all metric balls are considered as subsets of $Q$. 
		
		\begin{step}  
			Consider the set $$G = \left\{x \in E: \forall \varepsilon>0, \exists r<\varepsilon, \int_{B(x,10r)} \rho\,d\mathcal{H}^2 \leq 200 \int_{B(x,r)} \rho \,d\mathcal{H}^2\right\}.$$ 
			%For a given $j \in \mathbb{Z}$, let $G_j = \{x \in G: 2^j \leq \rho(x) < 2^{j+1}\}$. 
			
			Fix $\varepsilon>0$. We apply the basic covering theorem (\cite[Thm. 1.2]{Hei:01}) to choose a 
			%We will first estimate $\int_0^1 \mathcal{H}^1(u^{-1}(t) \cap G_j)\,dt$. Fix $\varepsilon>0$ and choose a 
			countable collection of pairwise disjoint balls $B_j = B(x_j,r_j)$ such that $x_j \in G$ and $10r_j \leq \min\{\varepsilon, d(x_j, Q\setminus E)\}$ for each $j$, the collection $\{5B_j\}$ covers $G$, and 
			$$\int_{10B_j} \rho\,d\mathcal{H}^2 \leq 200 \int_{B_j} \rho \,d\mathcal{H}^2 $$%, and 
			%$$\sum_j 25r_j^2 \leq 2\mathcal{H}^2(G).$$
			for each $j$. We also require that $20r_j < \min\{\diam \zeta_1, \diam \zeta_3\}$ for our application of Lemma \ref{lemm:oscillation}. We have
			$$\sum_j \int_{10B_j} \rho\, d\mathcal{H}^2 \leq \sum_j 200\int_{B_j} \rho\, d\mathcal{H}^2 \leq 200\int_{E} \rho\,d\mathcal{H}^2,$$
			where the last inequality follows since by our choice the balls $B_j$ are pairwise disjoint subsets of the open set $E$. For each $j$ fix a measurable set $A_j \supset u(5B_j)$ such that $\mathcal{H}^1(A_j)=\mathcal{H}^1(u(5B_j))$. Moreover, 
			define $g_\varepsilon: [0,1] \rightarrow \mathbb{R}$ by $$g_\varepsilon(t) = \sum_j r_j \chi_{A_j}(t).$$ 
			Integrating and applying Lemma \ref{lemm:oscillation} gives 
			$$\int_0^1 g_\varepsilon(t)\,dt = \sum_j r_j \mathcal{H}^1(u(5B_j)) \leq \frac{4}{\pi}\sum_j \int_{10B_j} \rho\, d\mathcal{H}^2.$$
			We observe that if $x \in \overline{u^{-1}(t)} \cap G$ for a given $t \in [0,1]$, with $j_x$ such that $x \in 5B_{j_x}$, then of necessity $t \in u(5B_{j_x})$. Hence $\mathcal{H}_\varepsilon^1(\overline{u^{-1}(t)} \cap G) \leq 10g_\varepsilon(t)$, by the definition of Hausdorff $\varepsilon$-content. Letting $\varepsilon \rightarrow 0$ and applying Fatou's lemma gives 
			$$
			\int_{[0,1]}^* \mathcal{H}^1(\overline{u^{-1}(t)} \cap G)\, dt \leq 10 \int_0^1 \liminf_{\varepsilon \to 0} g_\varepsilon(t)\, dt \\
			\leq 10 \liminf_{\varepsilon \rightarrow 0} \int_0^1 g_\varepsilon(t) \, dt. 
			$$
			Combining estimates, we obtain
			$$ \int_{[0,1]}^*\mathcal{H}^1(\overline{u^{-1}(t)} \cap G)\, dt \leq \frac{4 \cdot 2000}{\pi} \int_E \rho\,d\mathcal{H}^2.$$
		\end{step}
		\begin{step}
			We turn our attention next to the set $F = E \setminus G$. We claim that \begin{equation} \label{equ:bad_points}
			\int_{[0,1]}^* \mathcal{H}^1(\overline{u^{-1}(t)} \cap F)\,dt = 0.
			\end{equation} 
			%For ease of notation, in this section we write $\mu(B(x,r))$ in place of $\int_{B(x,r)} \rho\,d\mathcal{H}^2$.
			By the definition of $F$, for all $x \in F$ there exists $\varepsilon_x = 10^{-k_x}$ (for some integer $k_x \geq 1$) such that  
			\begin{equation} \label{equ:bad_iteration}
			\int_{B(x,10^{-j})} \rho\,d\mathcal{H}^2  \leq 200^{-1} \int_{B(x,10^{-j+1})} \rho\,d\mathcal{H}^2 \leq \cdots \leq 200^{-(j-k_x)} \int_{B(x,\varepsilon_x)} \rho\,d\mathcal{H}^2
			\end{equation}
			for all $j \geq k_x$. For all $k \in \mathbb{N}$, let $F_k = \{x \in F: k_x \leq k\}$. Observe that $F = \bigcup_k F_k$.   
			
			Now, fix $k \in \mathbb{N}$ and let $j \geq k$. By definition of the (spherical) Hausdorff measure, there exists a countable collection of balls $B_m=B(x_m, r_m)$ which cover $F_k$, such that $x_m \in F_k$, $r_m \leq \min\{10^{-j}, d(x_m,Q \setminus E),\diam\zeta_1/4, \diam \zeta_3/4\}$, and $\sum 4r_m^2 \leq 4\mathcal{H}^2(F_k)+4/j$.  For the last requirement, recall that the spherical Hausdorff 2-measure is at most 4 times the usual Hausdorff 2-measure. For each $m$, let $j_m$ be the largest integer such that $2r_m \leq 10^{-j_m}$. Observe that $10^{-j_m} < 20r_m \leq 20 \cdot 10^{-j}$ and hence that $j_m \geq j-1$.
			
			From Lemma \ref{lemm:oscillation} and \eqref{equ:bad_iteration} we deduce
			\begin{align*}  
			r_m \mathcal{H}^1(u(B_m))& \leq \frac{4}{\pi} \int_{2B_m} \rho\,d\mathcal{H}^2 \leq \frac{4}{\pi} \int_{B(x,10^{-j_m})}\rho\,d\mathcal{H}^2  \\ 
			& \leq \frac{4}{\pi} \cdot \frac{1}{200} \int_{B(x,10^{-j_m+1})}\rho\,d\mathcal{H}^2 \\ 
			& \leq \cdots \leq \frac{4}{\pi} \cdot \frac{1}{200^{j_m-k}} \int_{B(x,10^{-k})} \rho\, d\mathcal{H}^2.
			\end{align*}
			In particular,
			\begin{equation} \label{eq:oscillation}
			r_m \mathcal{H}^1(u(B_m)) \leq \frac{4}{\pi} \cdot \frac{200^k}{200^{j_m}} \int_Q \rho\,d\mathcal{H}^2.
			\end{equation}
			
			Similar to the first step of the proof, for each $m$ fix a measurable $A_m \supset u(B_m)$ such that $\mathcal{H}^1(A_m)=\mathcal{H}^1(u(B_m))$ and define $g_j(t) =  \sum_m r_m\chi_{A_m}(t)$. Then, as before, the definition of $\mathcal{H}_{1/j}^1$ gives 
			\begin{equation}
			\label{nakki} 
			\mathcal{H}_{1/j}^1(\overline{u^{-1}(t)} \cap F_k) \leq 2g_j(t) 
			\end{equation}
			for all $t \in [0,1]$. Integrating gives
			$$\int_0^1 g_j(t)\,dt \leq \sum_m r_m\mathcal{H}^1(u(B_m)). $$
			Applying \eqref{eq:oscillation} and using the relationships $1 < 20\cdot 10^{j_m} r_m$ and $j_m \geq j-1$ gives
			\begin{align*}
			\sum_m r_m\mathcal{H}^1(u(B_m)) & \leq \sum_m \frac{3200}{\pi}\cdot 200^k r_m^2 \left( \frac{100}{200} \right)^{j_m} \int_Q \rho\,d\mathcal{H}^2 \\
			& \leq \frac{3200}{\pi}\cdot 200^k \left( \frac{100}{200} \right)^{j} \left(\int_Q \rho\,d\mathcal{H}^2 \right) \sum_m r_m^2 \\
			& \leq \frac{3200}{\pi}\cdot 200^k \left( \frac{100}{200} \right)^{j} \left(\int_Q \rho\,d\mathcal{H}^2 \right)\left(\mathcal{H}^2(F_k)+1/j \right) .
			\end{align*}
			From this we obtain 
			$$\lim_{j \to \infty} \int_0^1 g_j(t) \, dt \leq \lim_{j \to \infty} \frac{3200}{\pi}\cdot 200^k \cdot2^{-j}  \left(\int_Q \rho\,d\mathcal{H}^2 \right)\left(\mathcal{H}^2(F_k)+1/j \right)=0. $$
			Combining with Fatou's lemma and \eqref{nakki} shows that $ \mathcal{H}^1(\overline{u^{-1}(t)} \cap F_k)=0$ for almost every $t$. Since this is true for all $k$, 
			\eqref{equ:bad_points} follows.
		\end{step} 
	\end{proof}
	
	With Proposition \ref{prop:coarea_u} in hand, the proof of Theorem \ref{thm:main} is now simple.  
	\begin{proof}[Proof of Theorem \ref{thm:main}]
		First, observe from Proposition \ref{prop:coarea_u} that $\mathcal{H}^1(\overline{u^{-1}(t)}) < \infty$ for almost every $t \in [0,1]$. Also, as shown in Proposition \ref{prop:connect}, $\overline{u^{-1}(t)}$ is connected for all $t$ and connects $\zeta_2$ and $\zeta_4$. By Proposition \ref{prop:existence_paths}, for almost every $t \in [0,1]$, $\overline{u^{-1}(t)}$ contains a simple rectifiable curve $\gamma_t$ joining $\zeta_2$ and $\zeta_4$ in $Q$. Let $g: Q \rightarrow [0,\infty]$ be an admissible function for $\Gamma_2$. Then 
		\begin{equation}
		\label{nakka}
		1 \leq \int_{\gamma_t} g \, ds \leq \int_{\overline{u^{-1}(t)}} g\,d\mathcal{H}^1
		\end{equation}
		for almost every $0 \leq t \leq 1$. Combining \eqref{nakka} with Proposition \ref{prop:coarea_u} yields 
		$$1 \leq \int^*_{[0,1]}\int_{\overline{u^{-1}(t)}} g\,d\mathcal{H}^1\,dt \leq  \frac{4 \cdot 2000}{\pi} \int_Q g\rho\, d\mathcal{H}^2.$$
		By H\"older's inequality,
		$$\int_Q g\rho\,d\mathcal{H}^2 \leq \left( \int_Q g^2\,d\mathcal{H}^2 \right)^{1/2} \left( \int_Q \rho^2\, d\mathcal{H}^2 \right)^{1/2} = \left( \int_Q g^2\,d\mathcal{H}^2 \right)^{1/2} (\Mod \Gamma_1)^{1/2}.$$
		Infimizing over all admissible $g$, we obtain
		$$\frac{1}{2000^2\cdot (4/\pi)^2} \leq \Mod \Gamma_1 \cdot \Mod \Gamma_2.$$
	\end{proof}
	
	\begin{rem}
		We can improve the value of $\kappa$ as follows. For $\delta>0$, a version of the basic covering theorem yields a family of balls $B_j$ with the property that $\{(3+\delta)B_j\}$ covers $G$, instead of $\{5B_j\}$. In the definition of the set $G$ in Proposition \ref{prop:coarea_u}, we may then use $B(x,2(3+\delta)r)$ in place of $B(x,10r)$. We also replace the constant 200 with $4(3+\delta)^2 + \delta$. Following the remainder of the proof and letting $\delta \rightarrow 0$ yields the final value of $\kappa = 216^2\cdot (4/\pi)^2$. 
	\end{rem} 
	
	\section{Continuity of $u$} \label{sec:continuity_u}
	
	In this section, we strengthen Corollary \ref{cor:continuity} by showing that the harmonic function $u$ is continuous on the entire set $Q$. In Theorem 5.1 of \cite{Raj:16}, the continuity of $u$ is proved employing reciprocality condition \eqref{equ:reciprocality(3)}. In contrast, we do not assume any of the reciprocality conditions in this section. 
	
	First, we need a technical fact. This is proved using Proposition 3.1 in \cite{Raj:16} (which is a re-statement of Proposition 15.1 in \cite{Sem:96c}) and an induction and limiting argument. 
	
	\begin{prop} \label{prop:curve_parametrization}
		Let $X$ be a metric space and $E \subset X$ a continuum with $\mathcal{H}^1(E) < \infty$. For all $x, y \in E$, there is a 1-Lipschitz curve $\gamma: [0, 2\mathcal{H}^1(E)] \rightarrow E$ such that $|\gamma| = E$, $\gamma(0) = x$, $\gamma(2\mathcal{H}^1(E)) = y$, and $\gamma^{-1}(z)$ contains at most two points for $\mathcal{H}^1$-almost every $z \in E$. 
	\end{prop}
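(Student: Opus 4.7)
The plan is to construct $\gamma$ as a uniform limit of 1-Lipschitz curves $\gamma_n$ produced by an iterative out-and-back procedure built from repeated applications of Proposition~\ref{prop:existence_paths}. First I apply Proposition~\ref{prop:existence_paths} directly to $E$ to obtain an injective 1-Lipschitz curve $\gamma_0 \colon [0, L_0] \to E$ from $x$ to $y$ with $\mathcal{H}^1$-preserving image $E_0$, so that $L_0 = \mathcal{H}^1(E_0)$. I then fix a countable dense sequence $\{p_n\}_{n \geq 1}$ in $E$ and inductively define $\gamma_n$ from $\gamma_{n-1}$ as follows. Letting $E_{n-1} = |\gamma_{n-1}|$, if $p_n \in E_{n-1}$ put $\gamma_n = \gamma_{n-1}$; otherwise let $C_n$ be the component of $E \setminus E_{n-1}$ containing $p_n$, so that $\overline{C_n}$ is a subcontinuum of $E$ and, by connectedness of $E$, $\overline{C_n} \cap E_{n-1} \neq \emptyset$. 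Choosing $w_n$ in this intersection and applying Proposition~\ref{prop:existence_paths} to $\overline{C_n}$ produces an injective 1-Lipschitz $\eta_n \colon [0, M_n] \to \overline{C_n}$ from $w_n$ to $p_n$ with $M_n = \mathcal{H}^1(|\eta_n|)$. I form $\gamma_n$ by splicing the out-and-back traversal of $\eta_n$ into $\gamma_{n-1}$ at a parameter $t_n$ with $\gamma_{n-1}(t_n) = w_n$; the result is a 1-Lipschitz curve on $[0, L_{n-1} + 2 M_n]$ with endpoints $x$ and $y$ and image $E_n = E_{n-1} \cup |\eta_n|$.

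Assuming the length bound $L_n \leq 2\mathcal{H}^1(E)$, which I address below, I extend each $\gamma_n$ to a 1-Lipschitz map on the full interval $[0, 2\mathcal{H}^1(E)]$ by declaring it to be constantly $y$ on $[L_n, 2\mathcal{H}^1(E)]$. Since $E$ is compact and the $\gamma_n$ are uniformly 1-Lipschitz, the Arzel\`a-Ascoli theorem produces a subsequence converging uniformly to a 1-Lipschitz limit $\gamma \colon [0, 2\mathcal{H}^1(E)] \to E$ satisfying the required endpoint conditions. For the equality $|\gamma| = E$, I use that for each fixed $n$ there exist $t_m \in [0, 2\mathcal{H}^1(E)]$ with $\gamma_m(t_m) = p_n$ for all $m \geq n$; extracting a further subsequence with $t_{m_k} \to s_n$ and invoking uniform convergence yields $\gamma(s_n) = p_n$, so density of $\{p_n\}$ and closedness of $|\gamma|$ give $E \subset |\gamma|$, while the reverse inclusion is automatic.

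The main obstacle, and the crux of the argument, is the $\mathcal{H}^1$-a.e.\ pointwise bound $\#\gamma^{-1}(z) \leq 2$. The area formula for 1-Lipschitz maps yields only the averaged estimate $\int_E \#\gamma^{-1}(z) \, d\mathcal{H}^1(z) \leq 2\mathcal{H}^1(E)$, which does not rule out large multiplicities on small sets, so the pointwise bound must be engineered into the construction. The essential refinement is to choose each detour $\eta_n$ so that $\mathcal{H}^1(|\eta_n| \cap E_{n-1}) = 0$: under this condition, every point freshly added to $E_n$ is visited exactly twice by $\gamma_n$ while no previously covered point has its multiplicity increased on a set of positive $\mathcal{H}^1$-measure, so the invariant $\#\gamma_n^{-1}(z) \leq 2$ holds $\mathcal{H}^1$-a.e.\ on $E_n$ by induction. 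The same property yields the length estimate $L_n \leq L_0 + 2(\mathcal{H}^1(E_n) - \mathcal{H}^1(E_0)) \leq 2\mathcal{H}^1(E)$ by telescoping. The remaining technical difficulty, which is where the tree-like structure of continua of finite $\mathcal{H}^1$-measure is used, is to verify that the detours $\eta_n$ can always be chosen with this null-intersection property, and that the resulting pointwise bound survives the Arzel\`a-Ascoli limit through careful book-keeping of the trunk-plus-detour decomposition of the parameter interval.
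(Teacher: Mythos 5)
Your approach is essentially the paper's: grow the curve by inserting out-and-back detours, keep the lengths summable, and pass to a uniform limit. Where the paper picks the next detour target $z_j$ to maximize intrinsic distance from the current image and takes a shortest intrinsic path to it (which then meets $|\gamma_j|$ only at its foot $w_j$), you pick from a fixed countable dense sequence and route the detour inside the closure of the component of $E \setminus E_{n-1}$ containing $p_n$ via Proposition~\ref{prop:existence_paths}. Both choices can be made to work, and your density argument for $E \subset |\gamma|$ is fine.

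The problem is that you explicitly stop before the two claims that carry all the weight of the proposition: (a) that the detours $\eta_n$ can be arranged so that $\mathcal{H}^1(|\eta_n| \cap E_{n-1}) = 0$, and (b) that the pointwise a.e.\ multiplicity bound on the approximating curves survives the passage to the limit $\gamma$. Labelling these ``the remaining technical difficulty'' rather than proving them leaves the argument incomplete exactly where it matters; you yourself correctly observe that the area formula yields only an averaged bound, so the pointwise statement is the genuine content. For (a), the natural fix is to restart $\eta_n$ from its last exit from $E_{n-1}$, which forces $|\eta_n| \cap E_{n-1}$ to be a single point; this is the stronger disjointness the paper's shortest-path-from-a-set construction produces automatically, and it is what makes the bookkeeping tractable (the arcs $|\gamma_0|, |\eta_1|, |\eta_2|, \dots$ are then pairwise disjoint off the countable set $\{w_n\}$). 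For (b), uniform convergence of maps does not in general control $\#\gamma^{-1}(z)$; one has to use that the parametrization of any fixed arc stabilizes because $\sum_n 2M_n < \infty$, so that for $z \notin \{w_n\}$ the $\gamma$-preimages of $z$ arise from the at most two $\gamma_n$-preimages for large $n$. The paper's own justification of this step is brief, but an argument of this kind needs to be stated; as written your proposal identifies the right obstacles without overcoming them.
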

	\begin{proof}
		For this proof, we will let $D$ denote the length metric on $E$ induced by $d$. We write $D_{zw}$ in place of $D(z,w)$. Observe that $D_{zw} < \infty$ for all $z,w \in E$ by Proposition 3.1 in \cite{Raj:16}. Also, for $z,w \in E$, we use $\gamma_{zw}$ to denote some fixed choice of injective 1-Lipschitz curve in $E$ from $z$ to $w$ whose length attains $D_{zw}$; the existence of at least one such curve is guaranteed by the Hopf-Rinow theorem. Let $L = 2\mathcal{H}^1(E)$.  
		
		We will inductively define a sequence of curves $\gamma_j: [0, L] \rightarrow E$. We define first $\gamma_1$ by
		$$\gamma_1(t) = \left\{ \begin{array}{ll} \gamma_{xy}(t) & 0 \leq t \leq D_{xy} \\ y & D_{xy} \leq t \leq L \end{array} \right. .$$
		
		For the inductive step, assume that $\gamma_j$ has been defined for some $j \in \mathbb{N}$. If $|\gamma_j| = E$, then stop and take $\gamma = \gamma_j$. Otherwise, define $\gamma_{j+1}$ as follows. Let $z_j$ be a point in $E$ maximizing $D$-distance from $|\gamma_j|$. Such a point exists by the compactness of $E$. Let $\gamma_{w_jz_j}$ be a shortest curve from $|\gamma_j|$ to $z_j$, with initial point $w_j \in |\gamma_j|$. Let $t_j$ denote the smallest point in $[0, L]$ for which $\gamma_j(t_j) = w_j$. Define now $\gamma_{j+1}$ by
		$$\gamma_{j+1}(t) = \left\{ \begin{array}{ll} \gamma_j(t) & 0 \leq t \leq t_j \\ \gamma_{w_jz_j}(t-t_j) & t_j \leq t \leq t_j + D_{w_jz_j} \\ \gamma_{w_jz_j}(t_j+2D_{w_jz_j} - t) & t_j + D_{w_jz_j} \leq t \leq t_j + 2D_{w_jz_j} \\ \gamma_j(t-2D_{w_jz_j}) & t_j + 2D_{w_jz_j} \leq t \leq \ell(\gamma_j) + 2D_{w_jz_j} \\ y & \ell(\gamma_j) + 2D_{w_jz_j} \leq t \leq L \end{array} \right. .$$

		Observe that the curve $\gamma_j$ has multiplicity at most 2, except possibly at the points $w_j$. Thus $\ell(\gamma_j) + 2D_{w_jz_j} \leq D_{xy} + \sum_{k=1}^{j} 2 D_{w_kz_k} < 2\mathcal{H}^1(|\gamma_j|) \leq L$. Hence the curve $\gamma_{j+1}$ is well-defined.  
		
		We also note that $D(\gamma_{j+1}(t),\gamma_j(t)) \leq 2D_{w_jz_j}$ for all $t \in [0,L]$ and $j \in \mathbb{N}$, and thus the curves $\gamma_j$ converge pointwise to a curve $\gamma: [0, L] \rightarrow E$. By construction, the curve $\gamma$ has multiplicity at most 2, except possibly on the countable set $\{w_j\}$. To see that $|\gamma| = E$, suppose there exists $z \in E \setminus |\gamma|$. But then $D(z,|\gamma|) > 0$. In particular, there exists $j \in \mathbb{N}$ with $D(w_j,z_j) < D(z, |\gamma_j|)$, contradicting the maximality of the choice of $z_j$.  
	\end{proof}
	
	We proceed now to the main result of this section. 
	
	\begin{thm} \label{thm:continuity}
		The function $u$ is continuous in $Q$. 
	\end{thm}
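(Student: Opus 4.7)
The plan is to argue by contradiction. Suppose $u$ is discontinuous at some $x_0 \in Q$; then there exist $a < b$ in $[0,1]$ and sequences $y_n, z_n \to x_0$ in $Q$ with $u(y_n) \to a$ and $u(z_n) \to b$. I first show that $x_0 \in E_t := \overline{u^{-1}(t)}$ for every $t \in (a,b)$. Fixing $r > 0$, let $U(x_0,r)$ denote the $x_0$-component of $B(x_0,r) \cap Q$; it is open in $Q$ and contains a neighborhood of $x_0$, so $y_n, z_n \in U(x_0,r)$ for $n$ large. Following the argument of Proposition \ref{prop:connect}, the maximum principle (Lemma \ref{lemm:maximum_principle}) applied to the components of $Q \setminus E_t$ forces $E_t$ to separate every pair of points $w_1, w_2$ with $u(w_1) < t < u(w_2)$. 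In particular $E_t$ meets the connected set $U(x_0,r)$ for every $r > 0$, so $x_0 \in E_t$.

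Now I construct, for each $n$, a curve $\sigma_n$ from $x_0$ to a point $y_n$ near $x_0$ inside a good level set, with $\int_{\sigma_n} \rho\,ds$ tending to zero. Applying Proposition \ref{prop:coarea_u} with $g = \rho\, \chi_{B(x_0, r)}$ yields
\begin{equation*}
\int_0^1 \int_{E_t \cap B(x_0, r)} \rho \, d\mathcal{H}^1 \, dt \leq 2000 \int_{B(x_0, r)} \rho^2 \, d\mathcal{H}^2 \longrightarrow 0 \quad \text{as } r \to 0,
\end{equation*}
since $\rho \in L^2$ and $\mathcal{H}^2(\{x_0\}) = 0$. A parallel application with $\rho$ replaced by an admissible function for $\Gamma_0$ of arbitrarily small $L^2$-norm shows that for almost every $t$, every injective $1$-Lipschitz curve contained in $E_t$ lies outside $\Gamma_0$. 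Chebyshev's inequality then permits the selection of radii $r_n \to 0$ and good values $t_n \in (b - 1/n, b)$ for which $\int_{E_{t_n} \cap B(x_0, r_n)} \rho \, d\mathcal{H}^1 \to 0$ and all injective $1$-Lipschitz arcs in $E_{t_n}$ avoid $\Gamma_0$. Proposition \ref{prop:curve_parametrization}, applied to $E_{t_n}$, allows one to analyze the local structure of $E_{t_n}$ near $x_0$ and to select $y_n \in u^{-1}(t_n)$ within the $x_0$-component of $E_{t_n} \cap \overline{B(x_0, r_n)}$ with $y_n \to x_0$; Proposition \ref{prop:existence_paths} then yields an injective $1$-Lipschitz curve $\sigma_n$ from $x_0$ to $y_n$ inside that component.

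The upper gradient inequality \eqref{equ:upper_gradient} along $\sigma_n$ gives
\begin{equation*}
|u(x_0) - t_n| = |u(x_0) - u(y_n)| \leq \int_{\sigma_n} \rho \, ds \leq \int_{E_{t_n} \cap B(x_0, r_n)} \rho \, d\mathcal{H}^1 \longrightarrow 0,
\end{equation*}
which forces $u(x_0) = b$. The symmetric argument applied to $(z_n)$ with levels $s_n \to a$ yields $u(x_0) = a$, contradicting $a < b$. The hardest step is the selection of $y_n$ inside the $x_0$-component of $E_{t_n} \cap \overline{B(x_0, r_n)}$: one must show that $E_{t_n}$ has enough local connectivity near $x_0$ to contain points of $u^{-1}(t_n)$ converging to $x_0$ within the same component as $x_0$. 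Proposition \ref{prop:curve_parametrization}, whose $1$-Lipschitz parametrization of $E_{t_n}$ with multiplicity at most $2$ controls the fine topology of $E_{t_n}$, is indispensable for this local analysis.
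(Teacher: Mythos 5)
Your overall structure — show $x_0 \in \overline{u^{-1}(t)}$ for all intermediate $t$, then derive a contradiction from the coarea inequality and the upper-gradient bound — overlaps with the paper only in the first half. The paper's mechanism for the contradiction is different and cleaner: for a.e.~$t$, the curve $\gamma_t$ furnished by Proposition \ref{prop:curve_parametrization} (whose image is all of $\overline{u^{-1}(t)}$, connecting $\zeta_2$ to $\zeta_4$) lies outside $\Gamma_0$, so $u$ is continuous along $\gamma_t$; combined with the density of $u^{-1}(t)$ in $|\gamma_t|$, this forces $u \equiv t$ on $|\gamma_t|$, hence $u(x_0) = t$ for any such $t$, and varying $t$ gives the contradiction. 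In contrast, your approach tries to localize: build short subcurves $\sigma_n$ of $E_{t_n}$ inside $B(x_0,r_n)$ from $x_0$ to nearby points $y_n$, and apply the upper-gradient estimate $|u(x_0)-u(y_n)| \leq \int_{\sigma_n}\rho$.

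This localization has a genuine gap that you yourself flag: you need $y_n \in u^{-1}(t_n)$ (an actual preimage, not merely a point of the closure $\overline{u^{-1}(t_n)}$) lying in the $x_0$-component of $E_{t_n}\cap\overline{B(x_0,r_n)}$, so that $u(y_n) = t_n$ exactly. Nothing in Proposition \ref{prop:curve_parametrization} delivers this. That proposition is a global statement — it parametrizes the entire continuum $E_{t_n}$ with multiplicity at most two — and says nothing about which points of $E_{t_n}$ near $x_0$ actually have $u$-value equal to $t_n$, as opposed to being in $E_{t_n}\setminus u^{-1}(t_n)$. The $x_0$-component of $E_{t_n}\cap\overline{B(x_0,r_n)}$ certainly reaches the sphere $\partial B(x_0,r_n)$ (since $E_{t_n}$ is a continuum through $x_0$ connecting $\zeta_2$ and $\zeta_4$), so it has diameter $\geq r_n$ and one can extract an injective curve $\sigma_n$ from $x_0$ to some $y_n$ in it — but $u(y_n)$ is then unknown, and the upper-gradient estimate only controls $|u(x_0)-u(y_n)|$, which is useless without pinning down $u(y_n)$. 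The same circularity you are trying to break (not knowing $u$ is continuous, so not being able to pass from the closure $\overline{u^{-1}(t)}$ to $u^{-1}(t)$) reappears at exactly this point. The paper sidesteps this by arguing along the \emph{entire} curve $\gamma_{t_n}$, where the density of $u^{-1}(t_n)$ together with continuity of $u\circ\gamma_{t_n}$ forces $u$ to equal $t_n$ everywhere on the image, without needing a careful local selection near $x_0$. You would need an analogous density-plus-continuity argument to close the gap; as written, the appeal to ``local structure near $x_0$'' is an assertion, not a proof.

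A secondary remark: the reduction of the exceptional family $\Gamma_0$ (via a coarea estimate with admissible $g_k\to 0$ in $L^2$, Cauchy--Schwarz, Fatou, and the factor-2 parametrization from Proposition \ref{prop:curve_parametrization}) is essentially the same device the paper uses to get $u$ continuous on $\gamma_t$ for a.e.~$t$, and that part of your argument is sound.
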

	\begin{proof}
		For all $t \in [0,1]$ such that $\mathcal{H}^1(\overline{u^{-1}(t)}) < \infty$, let $\gamma_t$ denote a curve connecting $\zeta_2$ to $\zeta_4$ whose image is $\overline{u^{-1}(t)}$ satisfying the conclusions of Proposition \ref{prop:curve_parametrization}.  	 
		By Lemma 4.3 in \cite{Raj:16}, $u$ is continuous on each $\gamma_t$ except on a curve family of modulus zero.  Observe that $$\int_{\gamma_t} g\,ds \leq 2\int_{\overline{u^{-1}(t)}}g\, d\mathcal{H}^1$$ 
		for each $t$ such that $\gamma_t$ is defined, for any Borel function $g:Q \rightarrow [0, \infty]$. From this fact and the coarea inequality Proposition \ref{prop:coarea_u}, it follows that $u$ is continuous on $\gamma_t$ for every $t \in E$, where $E \subset [0,1]$ has full measure. 
		
		Suppose for contradiction that $u$ is not continuous at the point $x \in Q$. Let $s_1 = \liminf_{y \rightarrow x} u(y)$ and $s_2 = \limsup_{y \rightarrow x} u(y)$; then $0 \leq s_1 < s_2 \leq 1$. Take $\varepsilon$ satisfying $0 < \varepsilon < (s_2-s_1)/2$. Then $x \in A_1 \cap A_2$, where $A_1= \overline{u^{-1}([s_1-\varepsilon, s_1+\varepsilon])}$ and $A_2 = \overline{u^{-1}([s_2-\varepsilon,s_2+\varepsilon])}$. Pick $t_1,t_2 \in (s_1 + \varepsilon, s_2 - \varepsilon) \cap E$ with $t_1 < t_2$. 
		Observe that $Q \setminus |\gamma_{t_1}|$ consists of two disjoint relatively open sets $U_1, U_2 \subset Q$, where each component of $U_1$ intersects $\zeta_1$ and each component of $U_2$ intersects $\zeta_3$. Lemma \ref{lemm:maximum_principle} implies that $A_1 \subset \overline{U}_1$ and that $A_2 \subset \overline{U}_2$. This shows that $x \in \overline{U}_1 \cap \overline{U}_2$ and hence that $x \in |\gamma_{t_1}|$. Since $u^{-1}(t_1)$ is a dense subset of $|\gamma_{t_1}|$, we see that $u(x) = t_1$. However, the same argument shows that $u(x) = t_2$, giving a contradiction. 
	\end{proof}
	
	%\vskip 5pt
	\noindent
	{\bf Acknowledgement.}
	We are grateful to Toni Ikonen, Atte Lohvansuu, Dimitrios Ntalampekos, Martti Rasimus and the referee for their comments and corrections.

	\bibliographystyle{abbrv}
	\bibliography{ReciprocalLowerBoundBiblio} 
\end{document}